\newtheorem{thm}{Theorem}[section]
\newtheorem{lem}[thm]{Lemma}
\newtheorem{cor}[thm]{Corollary}
\newtheorem{prop}[thm]{Proposition}
\newtheorem{rem}{Remark}[section]
\newtheorem{exa}{Example}[section]
\numberwithin{equation}{section}
\renewcommand{\a}{\alpha}
\renewcommand{\b}{\beta}
\newcommand{\e}{\varepsilon}
\newcommand{\la}{\lambda}
\newcommand{\Si}{\Sigma}
\newcommand{\De}{\Delta}
\def\R{{\mathbb{R}}}
\def\N{{\mathbb{N}}}
\def\Z{{\mathbb{Z}}}
\def\T{{\mathbb{T}}}
\def\C{{\chi}}
\title{Hydrodynamic limit for particle systems with 
degenerate rates without 
exclusive constraints}
\author{Makiko Sasada}
\date{}
\begin{document}
\maketitle

\begin{abstract}
\noindent
We consider the hydrodynamic behavior of some conservative particle systems with degenerate jump rates without exclusive constraints. More precisely, we study the particle systems without restrictions on the total number of particles per site with nearest neighbor exchange rates which vanish for certain configurations. Due to the degeneracy of the rates, there exists blocked configurations which do not evolve under the dynamics and all of the hyperplanes of configurations with a fixed number particles can be decomposed into different irreducible sets. We show that, for initial profiles smooth enough and bounded away from zero, the macroscopic density profile evolves under the diffusive time scaling according to a nonlinear diffusion equation (which we call the modified porous medium equation). The proof is based on the Relative Entropy method but it cannot be straightforwardly applied because of the degeneracy. 
\footnote{
Graduate School of Mathematical Sciences,
The University of Tokyo, Komaba, Tokyo 153-8914, Japan.}
\footnote{ 
$\quad$ e-mail: sasada@ms.u-tokyo.ac.jp, 
Fax: +81-3-5465-7011.}
\footnote{
\textit{Keywords: hydrodynamic limit, degenerate rates, lattice gas.}}
\footnote{
\textit{MSC: primary 60K35, secondary 82C22.}
}
\footnote{
\textit{The author is a JSPS Research Fellow and supported by the JSPS Grant 21-3656.}}
\end{abstract}
\section{Introduction}

Gon\c{c}alves, Landim and Toninelli established the hydrodynamic limit for some particle systems with degenerate rates under exclusive constraints in \cite{GLT}. They showed that the macroscopic density profile for their model evolves under the diffusive time scaling according to the porous medium equation.

In this paper, we consider some particle systems on the $d$-dimensional torus $\T^d_N$ with degenerate rates without restrictions on the total number of particles per site to obtain a microscopic derivation of the modified porous medium equation defined below.

The modified porous medium equation (MPME) is a partial differential equation of the form
\begin{equation} \label{eq:mpme}
\left\{
\begin{aligned}
\partial_t\rho(t,u) & = \De(\Phi(\rho(t,u))^m) \\
\rho(0,.) & = \rho_0(.)
\end{aligned}
\right.
\end{equation}
where $\De = \Si_{1 \le j \le d}\partial^2_{u_j}$, $m \in \N \setminus \{0,1\}$ and $\Phi(\rho)$ is a smooth strictly increasing function satisfying $\Phi(0)=0$ and $\lim_{\rho \to 0}\Phi^{\prime}(\rho) < \infty$. This can be rewritten in the divergence form as $\partial_t\rho(t,u)=\nabla(D(\rho(t,u))\nabla(\rho(t,u)))$ with diffusion coefficient $D(\rho(t,u))=m\Phi(\rho)^{m-1} \Phi^{\prime}(\rho)$. Note that $D(\rho)$ goes to zero as $\rho \to 0$, thus the equation looses its parabolic character.

To obtain a microscopic derivation of the MPME, we study stochastic particle systems on the $d$-dimensional discrete torus $\T^d_N$ without restrictions on the total number of particles per site. A configuration space of our microscopic dynamics is therefore given by $\N^{\T^d_N}$ with $\N=\{0,1,2,...\}$ and a configuration is defined by giving for each site $x \in\T^d_N$ the occupation variable, $\eta(x) \in \N$, which stands for the total number of particles at $x$. The process is defined through a function $g:\N \to \R_+$ vanishing at zero as follows. The evolution of our system is a continuous time Markov process in which each particle jumps from site $x$ to a nearest neighbor site $y$ at a rate $c(x,y,\eta)g(\eta(x))(\eta(x))^{-1}$. Namely, if there are $k$ particles at a site $x$, at rate $c(x,y,\eta)g(\eta(x))$ one of the particles at $x$ jumps to $y$. For each $m \in \N \setminus \{0,1\}$, we can provide a proper choice of $c(x,y,\eta)=c(y,x,\eta)$ to derive the MPME with the correspondent $m$ (see (\ref{eq:m2jump}) for $m=2$ and (\ref{eq:m3jump}) for $m=3$ in the next section). The function $\Phi(\rho)$ appearing in the hydrodynamic equation is given as an expectation value of $g$ with respect to an invariant measure $\nu_{\rho}$, which is defined in the next section and parameterized by the density of particles (see (\ref{eq:phidef}) in the next section). We remark that the choice $c(x,y,\eta)=1$ corresponds to the Zero Range process and, as is well known, leads to the nonlinear heat equation with $D(\rho)=\Phi^{\prime}(\rho)$ under diffusive re-scaling of time (see e.g. Section 5 and 6 in \cite{KL}). For a technical reason, in addition to an assumption usually assumed for $g$ to obtain the hydrodynamic behavior of the Zero Range process, we have to assume another condition for $g$ called $(G)$. As we note at Remark 2.4, the condition $(G)$ depends on $m$.

This paper is organized as follows: In Section 2 we introduce our model and state the main result. In Section 3, we give some examples for $g$ satisfying the desired condition. In Section 4, we give the proof of the main theorem via the Relative Entropy method. The proof of One block estimate and Proposition \ref{prop:convex} needed for the Relative Entropy method are postponed to Section 5 and Section 6, respectively. 
 
\section{Notation and Results}
\noindent
We consider the continuous time Markov process $\eta_t$ with state space $\chi^d_N=\N^{\T^d_N}$, where $\T^d_N=\{0,1,...,N-1\}^d$ is the discrete $d$-dimensional torus. Let $\eta$ denote a configuration in $\chi^d_N$, $x$ a site in $\T^d_N$ and $\eta(x)=k$ if there are $k$ particles at site $x$. The elementary moves which occur during evolution correspond to jump of particles among nearest neighbors, $x$ and $y$, occurring at a rate $c(x,y,\eta)$ times $g(\eta(x))$ where a function $g:\N \to \R_+$ satisfies that $g(k)=0$ if and only if $k=0$. Here, $c(x,y,\eta)=c(y,x,\eta)$ depends both on the couple $(x,y)$ and on the value of the configuration $\eta$ in a finite neighborhood of $x$ and $y$. On the other hand, $g(\eta(x))$ depends only on the value of the configuration $\eta$ at site $x$. Precisely, the dynamics is defined by means of an infinitesimal generator acting on cylinder functions $f:\chi^d_N \to \R$ as
\begin{displaymath}
(L_Nf)(\eta) = \sum_{x,y \in \T^d_N,|x-y|=1 }c(x,y,\eta)g(\eta(x))(f(\eta^{x,y})-f(\eta)),
\end{displaymath}
where $|x-y| = \sum_{1\le i \le d}|x_i-y_i|$ is the sum norm in $\R^d$ and
 \begin{equation}
 \eta^{x,y}(z) = \begin{cases}
	\eta(z)  & \text{if $z \neq x,y$} \\
	\eta(x)-1 & \text{if $z = x$} \\
	\eta(y)+1  & \text{if $z = y$}.
\end{cases}
\end{equation}

In the sequel we consider the rates 
\begin{equation}\label{eq:m2jump}
c(x,x+e_j,\eta)=g(\eta(x-e_j))+g(\eta(x+2e_j))
\end{equation}
where $\{e_j,j=1,...,d\}$ denotes the canonical basis of $\R^d$ and we will prove all the theorems for this choice. This, as we will prove, leads in the hydrodynamic limit to the modified porous medium equation (\ref{eq:mpme}) for $m=2$. Since $g(k)=0$ if and only if $k=0$, the degeneracy is exactly the same as the choice made in \cite{GLT} to obtain the porous medium equation with $m=2$. Namely, $c(x,x+e_i,\eta)=0$ both here and in the model of \cite{GLT} when $\eta(x-e_i)+\eta(x+2e_i)=0$, so the property is the same as in \cite{GLT}. Also we can provide for any other $m$ a proper choice of the rates such that all proofs can be readily extended leading in the diffusive re-scaling to the MPME with the correspondent $m$. For instance in the case $m=3$, the jump rates to be considered are
\begin{align}
c(x,x+e_j,\eta)&=g(\eta(x-e_j))g(\eta(x+2e_j))  \nonumber \\
&+g(\eta(x-2e_j))g(\eta(x-e_j)) +g(\eta(x+2e_j))g(\eta(x+3e_j)). \label{eq:m3jump}
\end{align}
For the choice proposed to obtain $m=3$, the degeneracy is also the same as in \cite{GLT}. Note that both the choices of the jump rates taken above have the property of defining a gradient system.

To prove the hydrodynamic behavior, we need some assumptions for the function $g$. First we state an assumption, which is usually required to prove the hydrodynamic behavior of the Zero Range process. Denote by $\psi^*$ the radius of convergence of the partition function $Z:\R_+ \to \R_+$ defined by
\begin{displaymath}
Z(\psi)=\sum_{k \ge 0}\frac{\psi^k}{g(k)!}
\end{displaymath} 
where $g(k)!= \Pi_{j=1}^kg(j)$ and $g(0)!=1$. Notice that $Z$ is analytic and strictly increasing on $[0, \psi^*)$. We assume for $g$ that $Z(\cdot)$ increases to $\infty$ as $\psi$ converges to $\psi^*$:
\begin{equation}
\lim_{\psi \uparrow \psi^*}Z(\psi)=\infty. \label{eq:infty}
\end{equation}
Now, we describe some invariant measures of this process which are also invariant for the Zero Range Process defined with the same function $g$. For each fixed $\psi \in [0,\psi^*)$, let $\bar{\nu}_\psi=\bar{\nu}_\psi^N$ denote a product measure on $\C_N^d$ with marginals given by
\begin{displaymath}
\bar{\nu}_\psi\{\eta(x)=k\} = \frac{\psi^k}{Z(\psi)g(k)!}
\end{displaymath}
for all $x \in \C_N^d$ and $k \in \N$. Then, the Markov process $\eta_t$ on $\chi_N^d$ is reversible with respect to the one parameter family of translation invariant product measures $\{\bar{\nu}_\psi\}_{\psi \in [0,\psi^*)}$.

Let $R(\psi)$ denote the expectation value of the occupation variable under $\bar{\nu}_\psi$, i.e., $R(\psi)=E_{\bar{\nu}_\psi}[\eta(0)]$. Under our assumption, it is known that $R:[0, \psi^*) \to \R_+$ is onto and one-to-one, so that there exists an inverse of $R$ (see e.g. Section 2 in \cite{KL}). Denote this inverse function by $\Phi$. By the definition, $\Phi$ is a smooth strictly increasing function and satisfies $\Phi(0)=0$ and $\lim_{\rho \to 0}\Phi^{\prime}(\rho)=g(1)$. Let $\nu_{\a}$ be the measure $\bar{\nu}_{\Phi(\a)}$. Then, the index $\a$ stands for the density of particles, namely $E_{\nu_\a}[\eta(0)]=\a$. A simple computation shows that 
\begin{equation}\label{eq:phidef}
\Phi(\a)=E_{\nu_{\a}}[g(\eta(0))].
\end{equation}
\begin{rem}\label{rem:moment}
By assumption (\ref{eq:infty}), for each $\a \in \R_+$ the measure $\nu_{\a}$ has a finite exponential moment:
there exists $\theta(\a)>0$ such that
\begin{displaymath}
E_{\nu_{\a}}[\exp(\theta \eta(0))] < \infty.
\end{displaymath}
\end{rem}
By the degeneracy of the rates, other invariant measures arise naturally. For example in one dimensional setting, any configuration $\eta$ such that the distance between the position of two consecutive nonempty sites is bigger than two has the exchange rates all of which vanish, because $c(x,x\pm1)g(\eta(x)) \neq 0$ only when $\eta(x) \{\eta(x\mp 1)+\eta(x\pm 2)\} \neq 0$. Therefore it is a blocked configuration and a Dirac measure supported on it is an invariant measure for this process. Since there are some blocked configurations, we need to study the irreducible components of the hyperplanes of configurations with a fixed number of particles in detail. 
\begin{rem}
Let $\Si_{N,k}$ denote the hyperplane of configurations with $k$ particles, namely
\begin{displaymath}
\Si_{N,k}=\{\eta \in \chi^d_N : \sum_{x \in \T^d_N}\eta(x)=k\}.
\end{displaymath}
For any pair of positive integers $N$ and $k$, $\Si_{N,k}$ is not irreducible. In fact, for example, a configuration $\eta \in \Si_{N,k}$ satisfying $\eta(x)=k$ for a single site $x \in \T^d_N$ and $\eta(y)=0$ for $y \neq x $ is a blocked configuration. Moreover, it is easily seen that a configuration is blocked when it does not contain at least a couple of sites at distance one or two with occupation number different from $0$. Note that for the case in \cite{GLT}, due to the exclusive constraints, there exists a constant $C(d) < \infty$ such that the hyperplane $\Si_{N,k}$ is irreducible for $k>C(d)(\frac{N}{3})^d$.
\end{rem}
\begin{rem}
Any two configurations $\eta$ and $\xi$ in $\Si_{N,k}$ belong to a same irreducible component if $\eta$ and $\xi$ have at least one $d$-dimensional hypercube of sites of linear size 2 with occupation number different from $0$. In other words, define the set $\Si_{N,k}^*$ as
\begin{displaymath}
\Si_{N,k}^*=\{\eta \in \Si_{N,k} : \sum_{x \in \T^d_N} \Pi_{y \in Q_x}\eta(y) \ge 1\}
\end{displaymath}
where $Q_x=\{y \in \T^d_N : y_i-x_i \in \{0,1\} \ \text{for all} \ 1 \le i \le d\}$, then $\Si_{N,k}^*$ is a subset of an irreducible component. This is a key ingredient to derive the hydrodynamic limit.

To show this, it is sufficient to see that a $d$-dimensional hypercube of particles of linear size 2 (i.e. $2^d$ particles which form a $d$-dimensional hypercube of linear size 2) is the mobile cluster, namely it has the following properties:
(i) there exists allowed sequence of jumps which allows to shift the mobile cluster to any other position, 
(ii) this allowed path is independent on the value of the occupation number on the remaining sites, (iii) the jump of any other particle to a neighboring site should be allowed when the mobile cluster is brought in a proper position in its vicinity. 
For the direct construction of the path in (i) and (ii), we refer the reader to \cite{GLT} where the path is described in Section 2. The property (iii) is easy to check.
\end{rem}
To prove Proposition \ref{prop:oneblock} we also assume the linear-growth of the power of $g$: 
\begin{displaymath}
(G) \quad \limsup_{k \to \infty} \frac{g(k)^2}{k} < \infty.
\end{displaymath}
\begin{rem}
If we consider the case $m >2$, we need to assume that
\begin{displaymath}
\limsup_{k \to \infty} \frac{g(k)^m}{k} < \infty.
\end{displaymath}
\end{rem}
\begin{rem}\label{rem:b}
Under the assumption (G), there exists some positive constant $b$ such that
\begin{displaymath}
g(k)^2 \le b \ k \quad \text{for all} \quad k \ge 0.
\end{displaymath}
\end{rem}
Let $\T^d$ denote the $d$-dimensional torus. Fix $\e>0$ and a initial profile $\rho_0:\T^d \to \R_+$ of class $C^{2+\e}(\T^d)$ satisfying the bounded condition, as the existence of a strictly positive constant $\delta_{0}$ such that  
\begin{equation}
\delta_{0} \le \rho_0(u) \quad \text{for all} \quad u \in \T^d. 
\end{equation}
Since $\rho_0$ is continuous, we can take $\delta _1>0$ as
\begin{equation}
\delta_{0} \le \rho_0(u) \le {\delta_1}  \quad \text{for all} \quad u \in \T^d. \label{eq:boundcondi}
\end{equation}
By the definition, $\Phi(\a)^2$ is a smooth strictly increasing function on $[\delta_0,\delta_1]$ and 
\begin{displaymath}
\sup_{\a \in [\delta_0,\delta_1]}|\Phi \cdot \Phi^{\prime}(\a)| < \infty.
\end{displaymath}
Therefore, by Theorem A2.4.1 of \cite{KL}, the equation (\ref{eq:mpme}) admits a solution that we denote by $\rho(t,u)$ which is of class $C^{1+\e, 2+\e}(\R_+ \times \T^d)$ and $\delta_0 \le \inf_{t,u}\rho(t,u) \le \sup_{t,u}\rho(t,u) \le {\delta_1}$.

Let $\nu_{\rho_0(\cdot)}^N$ be the product measure on $\chi_N^d$ such that:
\begin{displaymath}
\nu_{\rho_0(\cdot)}^N\{\eta,\eta(x)=k\}=\nu_{\rho_0(\frac{x}{N})}\{\eta,\eta(x)=k\}. 
\end{displaymath}
Hereafter, for $t \ge 0$, we denote by $\nu^N_{\rho_{(t,\cdot)}}$ the product measure on $\chi_N^d$ such that
\begin{displaymath}
\nu^N_{\rho_{(t,\cdot)}}\{\eta,\eta(x)=k\}=\nu_{\rho(t,\frac{x}{N})}\{\eta,\eta(x)=k\}. 
\end{displaymath}

For two measure $\mu$ and $\nu$ on $\chi_N^d$ denote by $H(\mu/\nu)$ the relative entropy of $\mu$ with respect to $\nu$, defined by:
\begin{displaymath}
H(\mu/\nu)=\sup_f \Big\{\int f d\mu - log \int e^f d\nu\Big\},
\end{displaymath}
where the supreme is carried over all continuous functions.

With these notations our main theorem is stated as follows:
\begin{thm}  \label{thm:main}
Under the assumption (G), let $\rho_0:\T^d \to \R_+$ be a initial profile of class $C^{2+\e}(\T^d)$ that satisfies the bounded condition (\ref{eq:boundcondi}) and $(\mu^N)_N$ be a sequence of probability measures on $\chi^N_d$ such that:
\begin{equation}
H(\mu^N/ \nu^N_{\rho_0(.)}) = o (N^d). \label{eq:initial}
\end{equation}
Then, for each $t \ge 0$
\begin{equation}
H(\mu^N S^N_t/ \nu^N_{\rho_{(t,\cdot)}}) = o (N^d),
\end{equation}
where $\rho(t,u)$ is a smooth solution of equation (\ref{eq:mpme}).
In the above formula, $S^N_t$ stands for the semigroup associated to the generator $L_N$ speeded up by $N^2$.
\end{thm}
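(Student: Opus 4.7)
I'll follow Yau's Relative Entropy method. Setting $H_N(t):=H(\mu^N S_t^N/\nu^N_{\rho(t,\cdot)})$, the goal is
\[
\frac{d}{dt}H_N(t) \le C\,H_N(t) + o(N^d),
\]
so that Gronwall's inequality together with the hypothesis (\ref{eq:initial}) closes the proof. The starting point is the standard inequality
\[
\frac{d}{dt}H_N(t) \le \int \Big(N^{2}\frac{L_N^{*}\psi_t^{N}}{\psi_t^{N}}-\partial_t\log\psi_t^{N}\Big)d(\mu^N S_t^N),
\]
where $\psi_t^{N}$ is the density of $\nu^N_{\rho(t,\cdot)}$ with respect to a fixed reference product measure $\nu_\a^N$; since the target measure is product, $\partial_t\log\psi_t^N$ has an explicit local decomposition $\sum_x\partial_t\rho(t,x/N)\,G(\eta(x),\rho(t,x/N))$.

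\textbf{Gradient structure and Taylor expansion.} A direct calculation using $c(x,y,\eta)=c(y,x,\eta)$ shows that with the rates (\ref{eq:m2jump}) the dynamics is gradient: the instantaneous current across $(x,x+e_j)$ satisfies $j_{x,x+e_j}(\eta)=\tau_x h_j(\eta)-\tau_{x+e_j}h_j(\eta)$ with
\[
h_j(\eta)=g(\eta(0))g(\eta(e_j))+g(\eta(0))g(\eta(-e_j))-g(\eta(-e_j))g(\eta(e_j))
\]
and $E_{\nu_\a}[h_j]=\Phi(\a)^2$ by the product structure of $\nu_\a$. Performing two discrete summations by parts in $L_N^*\psi_t^N/\psi_t^N$ and Taylor-expanding $\rho(t,\cdot)\in C^{1+\e,2+\e}$ to second order (legitimate because $\rho\ge\delta_0>0$), the leading $\De\Phi(\rho)^2$ term cancels against $\partial_t\rho$ via the MPME, and the Yau integrand reduces, modulo an $o(N^d)$ error, to
\[
\sum_{x,j}F'(\rho(t,x/N))\big\{\tau_x h_j(\eta)-\Phi(\rho(t,x/N))^2\big\}+\sum_{x}F''(\rho(t,x/N))\,Q_x(\eta),
\]
where $F=\log\Phi$ and $Q_x$ is a local quadratic in the density fluctuation $\eta(x)-\rho(t,x/N)$.

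\textbf{One-block, convex, Gronwall.} Proposition \ref{prop:oneblock} replaces each $\tau_x h_j(\eta)$ by the deterministic $\Phi(\bar\eta^{\ell}(x))^2$, where $\bar\eta^{\ell}(x)=(2\ell+1)^{-d}\sum_{|y-x|\le\ell}\eta(y)$ is the mesoscopic empirical density, at a cost that is $o(N^d)$ after first sending $N\to\infty$ and then $\ell\to\infty$. What remains is a sum of smooth functions of local empirical densities; Proposition \ref{prop:convex}, combined with the entropy inequality
\[
\int f\,d\mu\le\gamma^{-1}\Big\{H(\mu/\nu)+\log\int e^{\gamma f}d\nu\Big\},
\]
bounds this contribution (together with the $Q_x$-term) by $C\,H_N(t)+o(N^d)$, using the uniform bounds on $F'$ and $F''$ on $[\delta_0,\delta_1]$ and a convexity / local-central-limit estimate for $\bar\eta^\ell(x)$ under $\nu^N_{\rho(t,\cdot)}$. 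Gronwall then finishes.

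\textbf{Main obstacle.} The entire weight of the argument lies in Proposition \ref{prop:oneblock}. The classical one-block estimate uses ergodicity of $L_N$ on each fixed-particle hyperplane inside a mesoscopic box, but here $\Si_{N,k}$ shatters into many irreducible components (blocked configurations being stationary), and only the component $\Si_{N,k}^*$ is useful. The rescue is precisely the bounded-below assumption (\ref{eq:boundcondi}): the product measure $\nu^N_{\rho(t,\cdot)}$ puts exponentially small mass (in $\ell^d$) on configurations without a mobile cluster, and this concentration is transferred to $\mu^N S_t^N$ through the entropy inequality, so that the replacement step need only be run on $\Si_{N,k}^*$ where the mobile-cluster construction of \cite{GLT} provides the necessary irreducibility. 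Condition (G), through the bound $g(k)^2\le b\,k$ of Remark \ref{rem:b} together with the exponential moment from Remark \ref{rem:moment}, then provides the $L^2$-control of $g(\eta(x))$ required to keep the replacement error integrable and to close a spectral-gap estimate on $\Si_{N,k}^*$; these ingredients are assembled in Section 5.
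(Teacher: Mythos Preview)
Your overall strategy is the paper's: Yau's entropy production inequality, gradient decomposition, One-block replacement (Proposition~\ref{prop:oneblock}) adapted to the degenerate setting via the mobile-cluster/ergodic-component argument together with the entropy inequality and the lower bound $\rho\ge\delta_0$, then the large-deviation/convexity estimate (Proposition~\ref{prop:convex}), and Gronwall.

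Two points, however, need correction. First, your computation of the Yau integrand is inaccurate. After Taylor expansion one obtains
\[
\sum_{x}\sum_{j}\Big[\partial_{u_j}^2\lambda\big(t,\frac{x}{N}\big)\,\tau_x p_j(\eta)+\frac12\big(\partial_{u_j}\lambda\big(t,\frac{x}{N}\big)\big)^{2}\,\tau_x q_j(\eta)\Big]-\partial_t\log\psi_t^N+o(N^d),
\]
with $\lambda=\log\Phi(\rho)$, $p_j$ equal to your $h_j$, and a \emph{second} cylinder function $q_j(\eta)=c(0,e_j,\eta)\{g(\eta(0))+g(\eta(e_j))\}$ satisfying $E_{\nu_\alpha}[q_j]=4\Phi(\alpha)^2$. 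The coefficients are spatial derivatives of $\lambda$, not $F'(\rho)$ and $F''(\rho)$, and your $Q_x$ is not ``a local quadratic in $\eta(x)-\rho$'': $q_j$ is a genuine local observable that requires its own one-block replacement, exactly like $p_j$. Only after both replacements does the residue collapse to the single Taylor remainder $M(\eta^l(x),\rho)=\tilde p(\eta^l(x))-\tilde p(\rho)-\tilde p'(\rho)(\eta^l(x)-\rho)$ handled by Proposition~\ref{prop:convex}.

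Second, in this degenerate model the one-block cost is not purely $o(N^d)$. Because the entropy inequality relative to $\nu^N_{\rho(s,\cdot)}$ is used both to cut off large densities (via Lemma~\ref{lem:m0}, which is where assumption $(G)$ and Remark~\ref{rem:moment} actually enter) and to discard the non-ergodic set $\mathcal E_{x,l}^c$, Proposition~\ref{prop:oneblock} produces an additional term $\gamma^{-1}N^{-d}\int_0^t H_N(s)\,ds$, which must also be fed into Gronwall. No spectral-gap estimate on $\Sigma_{N,k}^*$ is used; once one restricts to $\mathcal E_{x,l}$ and truncates the density, the standard entropy/Dirichlet-form one-block argument applies unchanged.
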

To keep notation as simple as possible, hereafter we denote by $\mu^N_t$ the distribution on $\chi_N^d$ at macroscopic time $t$:
\begin{displaymath}
\mu^N_t:=\mu^N S^N_t,
\end{displaymath}
and by $\overline{\mu^N_t}$ the Cesaro mean of $\mu^N_t$:
\begin{displaymath}
\overline{\mu^N_t}:=\frac{1}{t}\int^t_0 \mu^N_s ds.
\end{displaymath}
\begin{rem} \label{rem:entropy}
Fix a bounded profile $\rho_0: \T^d \to \R_+$. In \cite{KL}, it is shown that every sequence of probability measures $\mu^N$ with entropy $H(\mu^N/\nu_{\rho_0(\cdot)}^N)$ of order $o(N^d)$ satisfies that
\begin{displaymath}
H(\mu^N/\nu_{\a}^N)=O(N^d)
\end{displaymath}
for every $\a >0$. In particular, if the entropy $H(\mu^N/\nu_{\a}^N)$ at time 0 is bounded by $C_0 N^d$, we have 
\begin{displaymath}
H(\overline{\mu^N_t}/\nu_{\a}^N) \le C_0 N^d \quad \text{for every} \quad t \ge 0 .
\end{displaymath}
\end{rem}

We can deduce the conservation of local equilibrium in the weak sense.
\begin{cor}
Under the assumption of Theorem \ref{thm:main}, for every continuous function $H:\T^d \to \R$, every bounded cylinder function $\Psi$ and every $t \ge 0$,
\begin{displaymath}
\lim_{N \to \infty}E_{\mu^N_t}[|\frac{1}{N^d}\sum_{x \in \T^d_N}H(\frac{x}{N})\tau_x \Psi(\eta)-\int_{\T^d}H(u)E_{\nu_{\rho(t,u)}}[\Psi]du|]=0
\end{displaymath}
where $\tau_x$ is the shift operator acting on the cylinder functions $f$ as well as configurations $\eta$ as follows:
\begin{displaymath}
\tau_xf(\eta)=f(\tau_x\eta), \quad (\tau_x\eta)(z):=\eta(z-x), \quad z \in \Z^d.
\end{displaymath}
\end{cor}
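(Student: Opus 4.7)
The plan is to reduce the statement to a convergence in probability for the empirical average under $\mu^N_t$ and then transfer this from the reference product measure $\nu^N_{\rho(t,\cdot)}$ using the entropy inequality together with the bound of Theorem \ref{thm:main}. Set
$$F_N(\eta) := \frac{1}{N^d}\sum_{x \in \T^d_N}H(x/N)\,\tau_x\Psi(\eta), \qquad I := \int_{\T^d}H(u)\,E_{\nu_{\rho(t,u)}}[\Psi]\,du.$$
Since $\Psi$ is bounded and $H$ is continuous, $\|F_N\|_\infty$ and $|I|$ are bounded uniformly in $N$, so it is enough to prove $\mu^N_t(|F_N - I|>\delta)\to 0$ for every $\delta>0$ and then conclude the $L^1$ convergence by the standard $\{|F_N-I|\le \delta\}$ / complement splitting.

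First I would carry out the analysis under the product measure. Because $\rho(t,\cdot)$ is smooth and satisfies $\delta_0 \le \rho(t,\cdot) \le \delta_1$, the marginals of $\nu^N_{\rho(t,\cdot)}$ vary smoothly with the site, and the map $\a \mapsto E_{\nu_\a}[\Psi]$ is continuous on $[\delta_0,\delta_1]$ (since $\Psi$ is a bounded cylinder function depending on only finitely many coordinates). A Riemann-sum approximation therefore gives
$$E_{\nu^N_{\rho(t,\cdot)}}[F_N] = \frac{1}{N^d}\sum_{x} H(x/N)\,E_{\nu^N_{\rho(t,\cdot)}}[\tau_x\Psi] \longrightarrow I.$$
For concentration around this mean I would partition $\T^d_N$ into $O(1)$ many sub-lattices, chosen so that on each one the supports of the translates $\tau_x\Psi$ are pairwise disjoint. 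Within each sub-lattice the summands $\frac{1}{N^d}H(x/N)\tau_x\Psi(\eta)$ are independent under the product measure and each bounded by $\|H\|_\infty\|\Psi\|_\infty / N^d$; Hoeffding's inequality then yields some $c(\delta)>0$ with
$$\nu^N_{\rho(t,\cdot)}\bigl(|F_N - E_{\nu^N_{\rho(t,\cdot)}}[F_N]| > \delta/2\bigr) \le e^{-c(\delta) N^d}.$$

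Finally I would apply the standard entropy inequality
$$\mu^N_t(A) \le \frac{\log 2 + H(\mu^N_t/\nu^N_{\rho(t,\cdot)})}{\log\bigl(1 + \nu^N_{\rho(t,\cdot)}(A)^{-1}\bigr)}$$
to the event $A = \{|F_N - E_{\nu^N_{\rho(t,\cdot)}}[F_N]| > \delta/2\}$. By Theorem \ref{thm:main} the numerator is $o(N^d)$, while the exponential decay above makes the denominator at least of order $c(\delta)N^d$, so $\mu^N_t(A) \to 0$. Since $E_{\nu^N_{\rho(t,\cdot)}}[F_N] \to I$, for all sufficiently large $N$ the inclusion $\{|F_N - I|>\delta\}\subset A$ holds, which gives the desired convergence in probability under $\mu^N_t$ and hence the conclusion.

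The argument is essentially routine once Theorem \ref{thm:main} is at hand; the only step requiring a little care is the exponential concentration under $\nu^N_{\rho(t,\cdot)}$, handled by the sub-lattice decomposition exploiting that $\Psi$ has finite support and is bounded. Nothing in the degeneracy of the dynamics or the structure of the irreducible components enters the proof of the corollary, since all the probabilistic work has been absorbed into the entropy estimate of the main theorem.
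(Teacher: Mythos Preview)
Your argument is correct. The paper does not supply its own proof of this corollary: it merely states it as a direct consequence of Theorem~\ref{thm:main}, relying implicitly on the standard deduction of weak local equilibrium from an $o(N^d)$ relative-entropy bound (as in \cite{KL}, Chapter~6). Your proof is precisely that standard deduction---exponential concentration of $F_N$ under the product measure $\nu^N_{\rho(t,\cdot)}$ via a finite sub-lattice decomposition and Hoeffding, followed by the entropy inequality to transfer the estimate to $\mu^N_t$---so there is nothing to compare; you have filled in what the paper leaves as routine. The only point worth noting is that $E_{\nu^N_{\rho(t,\cdot)}}[\tau_x\Psi]$ is not literally $E_{\nu_{\rho(t,x/N)}}[\Psi]$ because the marginals at the finitely many sites in the support of $\tau_x\Psi$ have slightly different densities, but you correctly observe that the smoothness of $\rho(t,\cdot)$ and the finite range of $\Psi$ make this discrepancy vanish in the Riemann-sum limit.
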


\section{Examples}
We present three classes of examples for $g:\N \to \R_+$ that satisfies both (\ref{eq:infty}) and the assumption (G).

\begin{exa}
Fix $q>0$ and let $g(k)$ be a real sequence:
\begin{displaymath}
g(0)=0, \quad  g(k)=\frac{k}{q+k-1} \quad \text{for all} \quad k \ge 1.
\end{displaymath}
It is well known that $Z(\psi)=(1-\psi)^{-q}$ and $\psi^*=1$. Furthermore by this explicit formula, we obtain that $\lim_{\psi \uparrow \psi^*}Z(\psi)=\infty$. The function $g$ also satisfies the assumption (G):
\begin{displaymath}
\lim_{k \to \infty}\frac{g(k)^2}{k}=\lim_{k \to \infty}\frac{k^2}{k(q+k-1)^2}=0.
\end{displaymath}
Therefore, we can apply Theorem \ref{thm:main} to the dynamics defined by $g$. In this case, $\Phi(\rho)$ and $D(\rho)$ also can be written explicitly:
\begin{displaymath}
\Phi(\rho)=\frac{\rho}{\rho+q} \quad D(\rho)=2\frac{\rho q}{(\rho+q)^3}.
\end{displaymath}
\end{exa}

\begin{exa}
Fix $0 \le \b \le 1$ and let $g(k)$ be a real sequence:
\begin{displaymath}
g(0)=0, \quad  g(1)=1, \quad g(k)=(\frac{k}{k-1})^{\b} \quad \text{for all} \quad k \ge 2.
\end{displaymath}
Then, $Z(\psi)=\sum_{k \ge0} \frac{\psi^k}{k^{\b}}$ and $\psi^*=1$. Furthermore it is well known that \linebreak  $\lim_{\psi \uparrow \psi^*}Z(\psi)=\infty$. The function $g$ also satisfies the assumption (G):
\begin{displaymath}
\lim_{k \to \infty}\frac{g(k)^2}{k}=\lim_{k \to \infty}\frac{k^{\b}}{k(k-1)^{\b}}=0.
\end{displaymath}
Therefore, we can apply Theorem \ref{thm:main} to the dynamics defined by $g$. The special case $\b=0$ is corresponding to Example 1 with $q=1$.
\end{exa}

\begin{exa}
Fix $0 < \gamma \le \frac{1}{2}$ and let $g(k)$ be a real sequence:
\begin{displaymath}
g(0)=0, \quad  g(k)=k^{\gamma} \quad \text{for all} \quad k \ge 1.
\end{displaymath}
Then, by $\lim_{k \to \infty}g(k)=\infty$, it is obvious that $\psi^*=\infty$ and $\lim_{\psi \uparrow \psi^*}Z(\psi)=\infty$. The function $g$ also satisfies the assumption (G):
\begin{displaymath}
\lim_{k \to \infty}\frac{g(k)^2}{k}=\lim_{k \to \infty}\frac{k^{2\gamma}}{k} < \infty.
\end{displaymath}
Therefore, we can apply Theorem \ref{thm:main} to the dynamics defined by $g$.
\end{exa}
\section{The Relative Entropy Method}
In this section, we prove Theorem \ref{thm:main} via the Relative Entropy Method due to Yau in \cite{Y}. The proof of Theorem \ref{thm:main} is divided in several lemmas. We start with introducing some notation. 
Fix $\a \in (0,\infty)$ and an invariant measure $\nu_\a$. Let
\begin{displaymath}
\psi^N_t  =  \frac{d\nu^N_{\rho(t,.)}}{d\nu_\alpha}, \
f^N_t =  \frac{d\mu^N_t}{d\nu_\alpha}, \
H_N(t) = H(\mu^N_t/ \nu^N_{\rho_{(t,\cdot)}}). 
\end{displaymath}
Since the measures $\nu^N_{\rho(t,.)}$ and $\nu_\alpha$ are product, it is very simple to obtain an expression for $\psi^N_t$ :
\begin{displaymath}
\psi^N_t  = \exp \{ \sum_{x \in \T^N_d}[\eta(x)  \log \frac{\Phi(\rho(t,\frac{x}{N}))}{\Phi(\alpha)} - \log \frac{Z(\Phi(\rho(t,\frac{x}{N}))}{Z(\Phi(\alpha))}]\}.
\end{displaymath}
We take $T>0$ arbitrarily and fix it in the rest of this paper. In order to prove the result, we are going to show that there exists a constant $\gamma >0$ satisfying
\begin{displaymath}
H_N(t) \le  o (N^d) + \frac{1}{\gamma} \int_0^t H_N(s)ds \quad \text{for all} \quad 0 \le t \le T 
\end{displaymath}
and apply Gronwall inequality to conclude.

There is a well-known estimate of the entropy production due to Yau \cite{Y}:
\begin{equation}
\partial_tH_N(t) \le \int_{\chi_N^d} \frac{1}{\psi^N_t}(N^2L_N^{*}\psi^N_t-\partial_t\psi^N_t)f^N_td\nu_\alpha^N \ \text{for all} \ t \ge 0, \label{eq:yau}
\end{equation}
where $L_N^{*}$ is the adjoint operator of $L_N$ in $L^2(\nu_\a)$. In our case, $L_N^*=L_N$.

By simple computations, we obtain that the term $\frac{N^2L_N^{*}\psi^N_t}{\psi^N_t}$ is bounded from above by
\begin{align*}
\sum_{x \in \T^N_d}\sum_{j=1}^d[\tau_xp_j(\eta)\partial_{u_j}^2\lambda(t,\frac{x}{N})+\frac{1}{2}\tau_xq_j(\eta)(\partial_{u_j}\lambda(t,\frac{x}{N}))^2]
+o(1)\sum_{x \in \T^N_d}\sum_{j=1}^d[|\tau_xp_j(\eta)|+\tau_xq_j(\eta)]
\end{align*}
where
\begin{displaymath}
p_j(\eta) = g(\eta(0))g(\eta(e_j))+g(\eta(0))g(\eta(-e_j))-g(\eta(e_j))g(\eta(-e_j)),
\end{displaymath}
\begin{displaymath}
q_j(\eta) = c(0,e_j,\eta)\{g(\eta(0)+g(\eta(e_j))\}=\{g( \eta(-e_j))+g(\eta(2e_j))\}\{g(\eta(0))+g(\eta(e_j))\},
\end{displaymath}
and
\begin{displaymath}
\lambda(t,u) = \log \Phi(\rho(t,u)).
\end{displaymath}
Notice that $W_{0,e_j}:=c(0,e_j,\eta)\{g(\eta(0)-g(\eta(e_j))\}=p_j(\eta)- \tau_{e_j}p_j(\eta)$ where $\tau_{e_j}$ is a shift operator.

Here and after, $o(1)$ means that the absolute value of the term is bounded from above by a constant $C_{N,T}$ depending only on $N$ and $T$ such that $\lim_{N \to \infty}C_{N,T}=0$. By Remark \ref{rem:b}
\begin{equation}
|\tau_xp_j(\eta)| \le b(\eta(x-e_j)+\eta(x)+\eta(x+e_j)) \label{eq:p}
\end{equation}
and
\begin{equation}
\tau_x q_j(\eta) \le b(\eta(x-e_j)+\eta(x)+\eta(x+e_j)+\eta(x+2e_j)) \label{eq:q}
\end{equation}
holds. Therefore, we obtain that
\begin{equation}
\frac{N^2L_N^{*}\psi^N_t}{\psi^N_t} \le \sum_{x \in \T^N_d}\sum_{j=1}^d[\tau_xp_j(\eta)\partial_{u_j}^2\lambda(t,\frac{x}{N})+\frac{1}{2}\tau_xq_j(\eta)(\partial_{u_j}\lambda(t,\frac{x}{N}))^2]
+o(1)\sum_{x \in \T^N_d}\eta(x). \label{eq:1}
\end{equation}
On the other hand, Taylor's expansion gives that
\begin{equation}
\sum_{x \in \T^N_d}\sum_{j=1}^d [\tilde{p}(\rho(t,\frac{x}{N}))\partial_{u_j}^2\lambda(t,\frac{x}{N})+\frac{1}{2}\tilde{q}(\rho(t,\frac{x}{N}))(\partial_{u_j}\lambda(t,\frac{x}{N}))^2] = o(N^d) \label{eq:2}
\end{equation}
where
\begin{displaymath}
\tilde{p} (\alpha):= E_{\nu_{\alpha}}[p_j(\eta)]=\Phi(\a)^2
\end{displaymath}
and
\begin{displaymath}
\tilde{q}(\alpha) := E_{\nu_{\alpha}}[q_j(\eta)]=4\Phi(\a)^2.
\end{displaymath}
By the identity 
\begin{displaymath}
\frac{Z^{\prime}(\psi)}{Z(\psi)}=\frac{R(\psi)}{\psi}
\end{displaymath}
and the fact that $\rho(t,u)$ is the solution of the equation (\ref{eq:mpme}), we can rewrite the term $\frac{1}{\psi^N_t}\partial_t\psi^N_t (\eta)=\partial_t(\log \psi^N_t)$ as
\begin{equation}
\sum_{x \in \T^N_d}\sum_{j=1}^d [\tilde{p}\prime (\rho(t,\frac{x}{N})) \partial_{u_j}^2\lambda(t,\frac{x}{N})
+ \frac{1}{2}\tilde{q} \prime (\rho(t,\frac{x}{N}))(\partial_{u_j}\lambda(t,\frac{x}{N}))^2](\eta(x)- \rho(t,\frac{x}{N})) \label{eq:3}.
\end{equation}
Up to this point we prove the next lemma:
\begin{lem}
For every $\e > 0$ and $l \in \N$, there exists $N_{\epsilon,l} \in \N$ such that for all $t \in [0,T]$ and $N \ge N_{\epsilon,l}$
\begin{align}
&\frac{1}{N^d} H^N(t) \le \e + \int_0^t ds \int_{\chi^N_d} \frac{1}{N^d}\sum_{x \in \T^N_d}\sum_{j=1}^d [ \partial_{u_j}^2\lambda(s,\frac{x}{N}) \{ \tau_xp_j(\eta) - \tilde{p} (\eta^l(x))\} \label{eq:lemma1} \\
& +\frac{1}{2}(\partial_{u_j}\lambda(t,\frac{x}{N}))^2 \{ \tau_xq_j(\eta) - \tilde{q} (\eta^l(x))\} \nonumber \\
&+ \partial_{u_j}^2\lambda(s,\frac{x}{N}) \{ \tilde{p} (\eta^l(x))- \tilde{p} (\rho(s,\frac{x}{N})) - \tilde{p}\prime (\rho(s,\frac{x}{N})) ((\eta^l(x) - \rho(s,\frac{x}{N})) \}  \nonumber \\
&+ \frac{1}{2}(\partial_{u_j}\lambda(s,\frac{x}{N}))^2 \{ \tilde{q} (\eta^l(x))- \tilde{q} (\rho(s,\frac{x}{N})) - \tilde{q}\prime (\rho(s,\frac{x}{N})) (\eta^l(x) - \rho(s,\frac{x}{N})) \}] f^N_s d\nu^N_{\alpha} \nonumber 
\end{align}
where $\eta^l(x)$ stands for the empirical density of particles in a cube of length $l$ centered at x:
\begin{displaymath}
\eta^l(x)=\frac{1}{(2l+1)^d}\sum_{|y-x| \le l}\eta(y).
\end{displaymath}
\end{lem}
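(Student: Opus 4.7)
My plan is to combine Yau's entropy-production bound (\ref{eq:yau}) with the three computations (\ref{eq:1}), (\ref{eq:2}), (\ref{eq:3}) into one integrated inequality, then use an elementary algebraic identity to introduce the block average $\eta^l(x)$, and finally absorb all residual errors via the entropy inequality applied to a Cesaro mean. Substituting (\ref{eq:1}) for $N^2 L^*_N\psi^N_t/\psi^N_t$ and (\ref{eq:3}) for $\frac{1}{\psi^N_t}\partial_t\psi^N_t$ into (\ref{eq:yau}), and subtracting off the deterministic sum in (\ref{eq:2}) (which is $o(N^d)$ and hence free of cost), the $\tilde p(\rho)$ and $\tilde q(\rho)$ pieces from (\ref{eq:3}) get combined with the $p_j,q_j$ terms inside the expectation. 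Integrating in $s\in [0,t]$, dividing by $N^d$, and discarding $H_N(0)=o(N^d)$ by (\ref{eq:initial}), one arrives at a bound of the form (\ref{eq:lemma1}) but with $\eta(x)$ in place of $\eta^l(x)$ and with $\tilde p(\rho(s,\frac{x}{N})), \tilde q(\rho(s,\frac{x}{N}))$ in place of $\tilde p(\eta^l(x)), \tilde q(\eta^l(x))$, plus a residual of the shape $o(1)\int_0^t ds\int (\sum_x\eta(x))f_s^N d\nu_\alpha^N$ produced by the $o(1)$-term in (\ref{eq:1}) (controlled via (\ref{eq:p})--(\ref{eq:q})).

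To convert this into the exact form (\ref{eq:lemma1}) I would apply the algebraic identity
\begin{equation*}
\tau_x p_j(\eta)-\tilde p(\rho)-\tilde p'(\rho)(\eta(x)-\rho) = \{\tau_x p_j(\eta)-\tilde p(\eta^l(x))\} + \{\tilde p(\eta^l(x))-\tilde p(\rho)-\tilde p'(\rho)(\eta^l(x)-\rho)\} + \tilde p'(\rho)(\eta^l(x)-\eta(x)),
\end{equation*}
and the analogous one for $q_j$. The first two summands are precisely the two lines appearing in (\ref{eq:lemma1}). The leftover piece $\tilde p'(\rho(s,\frac{x}{N}))\partial^2_{u_j}\lambda(s,\frac{x}{N})(\eta^l(x)-\eta(x))$, after discrete summation by parts in $x$, can be rewritten as $\sum_x \eta(x)R_N(s,x)$ with $\sup_{s\in[0,T],x}|R_N(s,x)|=O(l/N)$, thanks to the smoothness of $\rho(s,\cdot)$ on $\T^d$ and of $\Phi$ on $[\delta_0,\delta_1]$. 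Hence it has the same shape as the residual produced in the first step.

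It remains to show that every residual of the form $r_N\sum_x\eta(x)$ with $r_N\to 0$ (either $r_N=o(1)$ intrinsically, or $r_N=O(l/N)$ for fixed $l$) contributes $o(N^d)$ after time integration. By Fubini the time-integrated term equals $t r_N\int(\sum_x\eta(x)) d\overline{\mu^N_t}$, and the entropy inequality gives, for every $\gamma>0$,
\begin{equation*}
t r_N\int\Big(\sum_x\eta(x)\Big) d\overline{\mu^N_t}\le \frac{t}{\gamma}\Big[H(\overline{\mu^N_t}/\nu^N_\alpha) + N^d\log E_{\nu_\alpha}[\exp(\gamma r_N\eta(0))]\Big].
\end{equation*}
By Remark \ref{rem:entropy} the first term is $O(N^d/\gamma)$ uniformly in $t\in[0,T]$, and by Remark \ref{rem:moment} the logarithm is $O(\gamma r_N)$ for $\gamma r_N$ inside the exponential-moment window; optimizing $\gamma\sim r_N^{-1/2}$ makes the whole bound $O(\sqrt{r_N})\cdot N^d$, which is $o(N^d)$ as $N\to\infty$ with $l$ fixed. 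Choosing $N_{\e,l}$ large enough that the sum of the finitely many such residuals is below $\e N^d$ yields (\ref{eq:lemma1}).

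The main difficulty I anticipate is not any single step but the bookkeeping of errors: several residuals of distinct origins (from the $o(1)$-term in (\ref{eq:1}) controlled by (\ref{eq:p})--(\ref{eq:q}), from the Taylor remainder in (\ref{eq:2}), and from the summation by parts that replaces $\eta(x)$ by $\eta^l(x)$) must all be absorbed uniformly in $s\in[0,T]$. The Cesaro-mean formulation is essential here, since $H(\mu^N_t/\nu^N_\alpha)$ at fixed $t$ is not a priori $O(N^d)$, whereas Remark \ref{rem:entropy} gives exactly this bound for $\overline{\mu^N_t}$, which is what the time integration before invoking the entropy inequality calls for.
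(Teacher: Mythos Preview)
Your argument is correct and follows the same skeleton as the paper's proof: combine (\ref{eq:yau}), (\ref{eq:1}), (\ref{eq:2}), (\ref{eq:3}), insert $\eta^l(x)$ via the algebraic identity, and kill the leftover $(\eta^l(x)-\eta(x))$ term by a summation by parts using the uniform continuity of the coefficient $\partial^2_{u_j}\lambda\,\tilde p'(\rho)+\tfrac12(\partial_{u_j}\lambda)^2\tilde q'(\rho)$ on $[0,T]\times\T^d$. The paper's proof stops there, leaving the residuals of the form $o(1)\sum_x\eta(x)$ implicit.

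The only place you deviate is in how you dispose of those residuals. You pass to the Cesaro mean $\overline{\mu^N_t}$ and invoke Remark~\ref{rem:entropy} plus the entropy inequality with an optimized $\gamma$. This works, but it is more machinery than needed: since the dynamics conserves $\sum_x\eta(x)$, one has $E_{\mu^N_s}\big[\sum_x\eta(x)\big]=E_{\mu^N}\big[\sum_x\eta(x)\big]$ for every $s$, and the right-hand side is $O(N^d)$ by a single application of the entropy inequality at time~$0$ together with Remark~\ref{rem:moment}. Equivalently, entropy relative to the invariant measure $\nu^N_\alpha$ is non-increasing along the semigroup, so $H(\mu^N_t/\nu^N_\alpha)\le H(\mu^N/\nu^N_\alpha)=O(N^d)$ for each fixed $t$, contrary to your worry that this bound is only available for the Cesaro mean. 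Either shortcut replaces your optimization step and makes the residual control a one-liner, which is presumably why the paper does not spell it out.
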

\begin{proof}
From the equations (\ref{eq:yau}), (\ref{eq:1}), (\ref{eq:2}) and (\ref{eq:3}), we have only to prove the estimate:
\begin{align*}
\limsup_{N \to \infty}\ & |\int_0^t ds \int_{\chi^N_d} \frac{1}{N^d}\sum_{x \in \T^N_d}\sum_{j=1}^d \{ \partial_{u_j}^2\lambda(s,\frac{x}{N}) \tilde{p}\prime (\rho(s,\frac{x}{N})) \\
& + \frac{1}{2}(\partial_{u_j}\lambda(s,\frac{x}{N}))^2 \tilde{q}\prime (\rho(s,\frac{x}{N})) \} (\eta^l(x) - \eta(x))f^N_s d\nu^N_{\alpha}|=0
\end{align*}
for every $l \in \N$. Since $\partial_{u_j}^2\lambda(s,u) \tilde{p}\prime (\rho(s,u)) + \frac{1}{2}(\partial_{u_j}\lambda(s,u))^2 \tilde{q}\prime (\rho(s,u))$ is a uniformly continuous function on $[0,T] \times \T^d$, a summation by parts gives this estimate.
\end{proof}

To replace the cylinder functions $\tau_xp_j(\eta)$ and $\tau_xq_j(\eta)$ by their mean values $\tilde{p}(\eta^l(x))$ and $\tilde{q}(\eta^l(x))$ respectively, we need to prove the next proposition.
\begin{prop}[One-block Estimate]\label{prop:oneblock}
Let $\psi$ be $p_j$ or $q_j$. Then, for small $\gamma >0 $,
\begin{equation}
\limsup_{l \to \infty}\limsup_{N \to \infty}\int_0^t ds \int_{\chi^N_d} \frac{1}{N^d}\sum_{x \in \T^N_d} \tau_xV_{l,\psi}(\eta)f^{N}_s d\nu^N_{\alpha} \le \frac{1}{\gamma N^d}\int_0^t H^N(s) ds
\end{equation}
where 
\begin{displaymath}
V_{l,\psi}(\eta) = \lvert \frac{1}{(2l+1)^d} \sum_{|y| \le l} \tau_{y}\psi(\eta)-\tilde{\psi}(\eta^l(0)) \rvert
\end{displaymath}
and $\tilde{\psi}(\a)=E_{\nu_{\a}}[\psi]$. More precisely, for small $\gamma >0$ and every $\e >0 $, there exists $l_{\gamma, \e}$ such that for all $l \ge l_{\gamma, \e}$, there exists $N_{\gamma, \e, l}$ such that for all $t \in [0,T]$ and $N \ge N_{\gamma, \e,l}$,
\begin{displaymath}
\int_0^t ds \int_{\chi^N_d} \frac{1}{N^d}\sum_{x \in \T^N_d} \tau_xV_{l,\psi}(\eta)f^{N}_s d\nu^N_{\alpha} \le \e + \frac{1}{\gamma N^d}\int_0^t H^N(s) ds.
\end{displaymath}
\end{prop}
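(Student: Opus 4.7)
The plan is to apply the entropy inequality with reference measure $\nu^N_{\rho(s,\cdot)}$, localize the resulting exponential moment to a cube of side $2l+O(1)$ via H\"older, and conclude by the equivalence of ensembles for the grand canonical measure $\nu_\rho$. Since $f^N_s\, d\nu^N_\alpha = d\mu^N_s$, the variational formula for relative entropy yields, for every $\gamma > 0$,
\[
\int \sum_{x \in \T^d_N}\tau_x V_{l,\psi}\, f^N_s\, d\nu^N_\alpha \;\le\; \frac{1}{\gamma}\, H_N(s) \;+\; \frac{1}{\gamma}\log E_{\nu^N_{\rho(s,\cdot)}}\Big[\exp\Big(\gamma\sum_{x}\tau_x V_{l,\psi}\Big)\Big].
\]
Integrating in $s$ and dividing by $N^d$, the first term produces the entropy contribution displayed in the statement, and it remains to show that the exponential-moment term is at most $\e$, uniformly in $N$ and $s\in[0,T]$, once $\gamma$ is small and $l \ge l_{\gamma,\e}$.

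Since $\tau_x V_{l,\psi}$ depends on $\eta$ only in a box of radius $l+O(1)$ around $x$, I would partition $\T^d_N$ into $K:=(2l+c)^d$ translation-residue classes whose summands have pairwise disjoint spatial support. Generalized H\"older with exponent $K$, together with the product structure of $\nu^N_{\rho(s,\cdot)}$ and the bound $\rho(s,\cdot)\in[\delta_0,\delta_1]$, gives
\[
\frac{1}{N^d}\log E_{\nu^N_{\rho(s,\cdot)}}\Big[e^{\gamma\sum_x \tau_x V_{l,\psi}}\Big] \;\le\; \frac{1}{K}\sup_{\rho\in[\delta_0,\delta_1]}\log E_{\nu_\rho}\big[e^{C\gamma K V_{l,\psi}}\big].
\]
Using (\ref{eq:p})--(\ref{eq:q}) and Remark \ref{rem:b}, the cylinder function $\psi$ is linearly dominated by nearby occupation numbers, so $K V_{l,\psi}$ is bounded by $C\sum_{|y|\le l+O(1)}\eta(y)$; Remark \ref{rem:moment} then makes the exponential moment finite for small $\gamma$. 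A truncation at $\eta(y)\le M$ (with $M=M(\gamma)\to\infty$ as $\gamma\to 0$) combined with a Taylor expansion of the log moment-generating function reduces the bound to $E_{\nu_\rho}[V_{l,\psi}]+O(\gamma)$. The remaining task
\[
\sup_{\rho\in[\delta_0,\delta_1]}E_{\nu_\rho}[V_{l,\psi}(\eta)]\to 0\quad\text{as }l\to\infty
\]
is the standard equivalence of ensembles: conditioning on the total mass $\sum_{|y|\le l}\eta(y)=k$, the canonical expectation of $(2l+1)^{-d}\sum_{|y|\le l}\tau_y\psi(\eta)$ concentrates at $\tilde\psi(k/(2l+1)^d)=\tilde\psi(\eta^l(0))$ uniformly in the relevant density range, and continuity of $\tilde\psi$ closes the argument.

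The main obstacle is the interplay between the unboundedness of $V_{l,\psi}$ and the requirement to take $l\to\infty$. Since $q_j$ depends quadratically on $g$, without a quantitative growth bound on $g$ the single-cube exponential moment $E_{\nu_\rho}[e^{C\gamma K V_{l,\psi}}]$ would blow up faster than $e^{CK}$ as $l\to\infty$, and the H\"older localization would fail. This tension is resolved precisely by assumption (G), whose consequence Remark \ref{rem:b} provides the linear bound $g(k)^2\le bk$; through (\ref{eq:p})--(\ref{eq:q}) this gives $|\psi|\le C\sum_{|y|\le O(1)}\eta(y)$, so the required exponential moment factorizes across sites under the product structure of $\nu_\rho$ and becomes controllable by Remark \ref{rem:moment} for $\gamma$ sufficiently small. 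Note that the reducibility of $\Sigma_{N,k}$ pointed out in Remark 2.3 does not directly enter this proposition: the entropy inequality converts the claim into an equilibrium computation under the product measure $\nu_\rho$, where irreducibility of the dynamics plays no role; the structure of irreducible components will surface elsewhere in the proof of the main theorem.
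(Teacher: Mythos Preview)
There is a genuine gap at the ``Taylor expansion'' step. After the H\"older localization you must show that, for a \emph{fixed} small $\gamma$,
\[
\frac{1}{\gamma K}\log E_{\nu_\rho}\bigl[\exp\{C\gamma K\,V_{l,\psi}\}\bigr]\xrightarrow[l\to\infty]{}0
\]
uniformly in $\rho\in[\delta_0,\delta_1]$, with $K=(2l+c)^d$. This fails. Under the product measure $\nu_\rho$ the pair $\bigl((2l+1)^{-d}\sum_{|y|\le l}\tau_y\psi,\ \eta^l(0)\bigr)$ satisfies a large-deviation principle at speed $K$ with a good rate function $J$ vanishing only at $(\tilde\psi(\rho),\rho)$; by contraction the rate function $I$ of $V_{l,\psi}$ is quadratic near zero, and Varadhan's lemma gives $\lim_{l}K^{-1}\log E_{\nu_\rho}[\exp\{C\gamma K V_{l,\psi}\}]=\sup_{v\ge0}\{C\gamma v-I(v)\}$, which is of order $\gamma^2$ and strictly positive for every $\gamma>0$. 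After dividing by $\gamma$ a residual of order $\gamma$ remains that does \emph{not} vanish as $l\to\infty$, so you cannot make the error $\le\e$ for arbitrary $\e>0$ at fixed $\gamma$, which is precisely what the precise form of the proposition requires. The Taylor expansion is illegitimate because the exponent $C\gamma K V_{l,\psi}$ is not uniformly small: $K\to\infty$ while $V_{l,\psi}$ is only $o(1)$ in probability, not $o(K^{-1})$.

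The paper's proof is not a purely equilibrium computation; it uses dynamical input. After cutting off large local densities via Lemma~\ref{lem:m0} and the linear domination $\tau_xV_{l,\psi}\le w\,\eta^{l+A}(x)$ (this is where assumption~(G) enters, through (\ref{eq:v})), on the bounded-density set the paper invokes the \emph{standard} one-block argument, which rests on the entropy bound $H(\overline{\mu^N_t}/\nu_\alpha)\le C_0N^d$ of Remark~\ref{rem:entropy} together with the Dirichlet-form estimate for the time-averaged density, followed by an ergodicity argument localized to the box. Because the rates are degenerate, that ergodic step only works on the irreducible component $\mathcal{E}_{x,l}$ containing the mobile-cluster configurations; on $\mathcal{E}_{x,l}^c$ the paper applies the entropy inequality against $\nu^N_{\rho(s,\cdot)}$ and the super-exponential bound $\nu^N_{\rho(s,\cdot)}(\mathcal{E}_{x,l}^c)\le(1-P_{\delta_0}^{2^d})^{l^d}$, which relies on $\rho(s,\cdot)\ge\delta_0>0$. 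So, contrary to your closing remark, the irreducibility structure enters the proof of this proposition directly and is not deferred to elsewhere.
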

We prove this proposition in the next section. From a summation by parts and Proposition \ref{prop:oneblock}, we can deduce the replacement:
\begin{align*}
\limsup_{l \to \infty}\limsup_{N \to \infty}& \int_0^t ds \int_{\chi^N_d} \frac{1}{N^d}\sum_{x \in \T^N_d} [\partial_{u_j}^2\lambda(s,\frac{x}{N}) \{ \tau_xp_j(\eta) - \tilde{p} (\eta^l(x))\} \\
& +\frac{1}{2}(\partial_{u_j}\lambda(t,\frac{x}{N}))^2 \{ \tau_xq_j(\eta) - \tilde{q} (\eta^l(x))\}]f^{N}_s d\nu^N_{\alpha} \le \frac{1}{\gamma N^d} \int_0^t H^N(s) ds
\end{align*}
straightforwardly. The rigorous statement of this inequality is same as that of Proposition \ref{prop:oneblock}.

Next, to estimate the right hand side of (\ref{eq:lemma1}) we show that the expectation
\begin{displaymath}
\int_0^t ds \int_{\chi^N_d}\sum_{x \in \T^N_d}\sum_{j=1}^d [\partial_{u_j}^2\lambda(s,\frac{x}{N}) + 2(\partial_{u_j}\lambda(s,\frac{x}{N}))^2 ]M(\eta^l(x), \rho(s,\frac{x}{N})) f^N_s d\nu^N_{\alpha}
\end{displaymath}
is bounded from above by the sum of a term of $o(N^d)$ and the time integral of the entropy multiplied by a constant, where
\begin{align*}
M(a,b) &= \tilde{p}(a) - \tilde{p}(b) - \tilde{p} \prime (b)(a-b) \\
	&=\frac{1}{4}\{\tilde{q}(a) - \tilde{q}(b) - \tilde{q} \prime (b)(a-b)\}.
\end{align*}
By the entropy inequality, for every $\gamma >0$, this integral is bounded above by
\begin{align*}
\frac{1}{\gamma} \int_0^t H^N(s) ds + \frac{1}{\gamma} \int_0^t ds \log E_{\nu_{\rho(s,\cdot)}^N}[\exp\{\gamma\sum_{x \in \T^N_d}F(s,\frac{x}{N})M(\eta^l(x),\rho(s,\frac{x}{N}))\}]
\end{align*}
where
\begin{displaymath}
F(s,u)=\partial_{u_j}^2\lambda(s,u) + 2(\partial_{u_j}\lambda(s,u))^2.
\end{displaymath}
The next result concludes the proof of Theorem \ref{thm:main}.
\begin{prop}\label{prop:convex}
For sufficiently small $\gamma > 0$,
\begin{equation}
\limsup_{l \to \infty}\limsup_{N \to \infty} \frac{1}{\gamma N^d} \int_0^t ds \log E_{\nu_{\rho(s,\cdot)}^N}[\exp\{\gamma\sum_{x \in \T^N_d}F(s,\frac{x}{N})M(\eta^l(x),\rho(s,\frac{x}{N}))\}] \le 0
\end{equation}
for all $t \in [0,T]$.
\end{prop}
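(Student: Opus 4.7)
The plan is to combine a H\"older-type block decoupling with a Varadhan-type large deviations estimate on each single block, in the spirit of Chapter~6 of \cite{KL}. Fix $l$ and partition $\T^d_N$ into $V=(2l+1)^d$ classes $A_1,\dots,A_V$ such that within each class the cubes $\{y:|y-x|\le l\}$, $x\in A_k$, are pairwise disjoint. Writing the exponent as $\ga\sum_x Y_x$ with $Y_x:=F(s,x/N)M(\eta^l(x),\rho(s,x/N))$ and applying the generalised H\"older inequality to $\prod_k \exp(\ga\sum_{x\in A_k}Y_x)$,
\[
\log E_{\nu^N_{\rho(s,\cdot)}}\Bigl[\exp\Bigl(\ga\sum_x Y_x\Bigr)\Bigr]\le \frac{1}{V}\sum_k \log E_{\nu^N_{\rho(s,\cdot)}}\Bigl[\exp\Bigl(V\ga\sum_{x\in A_k}Y_x\Bigr)\Bigr].
\]
Since $\nu^N_{\rho(s,\cdot)}$ is product and the blocks indexed by $A_k$ are disjoint, the inner expectation factorises as a product of independent one-block contributions. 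A standard smoothness argument then replaces the block marginal by the homogeneous product $\nu_r^V$, $r:=\rho(s,x/N)$, at the cost of an $o(1)$ error as $N\to\infty$, reducing the problem to showing, for $\ga$ sufficiently small,
\[
\limsup_{V\to\infty}\sup_{r\in[\de_0,\de_1]}\sup_{|F_0|\le\|F\|_\infty}\frac{1}{V}\log E_{\nu_r^V}\Bigl[\exp\Bigl(V\ga F_0 M(\bar\eta_V,r)\Bigr)\Bigr]\le 0,
\]
where $\bar\eta_V$ is the empirical mean of $V$ i.i.d.\ copies of $\eta(0)$ under $\nu_r$.

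For the one-block estimate I would invoke Cram\'er's theorem and Varadhan's lemma. Cram\'er gives an LDP for $\bar\eta_V$ on $\R_+$ with convex rate function $I_r(a)=\sup_\th\{\th(a-r)-\La_r(\th)\}$, where $\La_r(\th):=\log E_{\nu_r}[e^{\th(\eta(0)-r)}]$ is finite on a neighbourhood of $\th=0$ by Remark~\ref{rem:moment}. Since $M(r,r)=0$ and $\partial_a M(r,r)=0$, Taylor expansion yields $M(a,r)=\tfrac12 \tilde p''(r)(a-r)^2+O((a-r)^3)$ near $a=r$, while $I_r(a)\ge (a-r)^2/(2\si_r^2)$ there. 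Hence for $\ga$ small enough $\ga F_0 M(a,r)\le I_r(a)$ in a neighbourhood of $a=r$, with equality only at $a=r$. Varadhan's lemma then gives $\sup_a\{\ga F_0 M(a,r)-I_r(a)\}=0$, which is precisely the required one-block bound; time integration in $s$ and division by $\ga N^d$ complete the argument.

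The main obstacle will be the tail control needed for Varadhan's lemma to apply. This is precisely where assumption~(G) enters. From $g(k)^2\le bk$ (Remark~\ref{rem:b}) one deduces $\Phi(\a)=E_{\nu_\a}[g(\eta(0))]\le E_{\nu_\a}[g(\eta(0))^2]^{1/2}\le (b\a)^{1/2}$, so $\tilde p(\a)=\Phi(\a)^2$ grows at most linearly in $\a$ and $|M(a,r)|\le C(1+|a|)$ uniformly in $r\in[\de_0,\de_1]$. On the other hand $I_r$ grows at least linearly at infinity with a positive slope bounded below uniformly in $r\in[\de_0,\de_1]$, since $\La_r(\th)$ is finite on a fixed neighbourhood of zero uniformly in $r$, again by Remark~\ref{rem:moment}. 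Choosing $\ga$ so small that $\ga\|F\|_\infty C$ is strictly less than that slope simultaneously secures the superexponential moment condition required by Varadhan and forces $\ga F_0 M(a,r)-I_r(a)\to -\infty$ as $a\to\infty$, completing the proof.
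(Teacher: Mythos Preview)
Your approach coincides with the paper's: H\"older decoupling into independent blocks followed by a one-block large-deviations estimate, reducing the question to showing $\sup_{\la\ge0}\{\ga F_0 M(\la,r)-I_r(\la)\}\le0$ for small $\ga$, uniformly in $r\in[\de_0,\de_1]$ and $|F_0|\le\|F\|_\infty$. The paper simply packages the decoupling step by quoting Lemma~6.1.8 of \cite{KL}, whose proof is exactly your H\"older/Cram\'er argument.

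Two places where the paper is more careful than your write-up. First, before fixing $s$, the paper proves a uniform-in-$N,l,s$ upper bound on the integrand (using $\tilde p(\la)\le b\la$ from assumption~(G) together with the exponential moment of $\nu_{\de_1}$) so that reverse Fatou justifies $\limsup_{l}\limsup_{N}\int_0^t\le\int_0^t\limsup_{l}\limsup_{N}$; your closing line ``time integration in $s$\dots'' presupposes this exchange without argument. Second, for the variational bound the paper invokes the single global estimate
\[
\sup_{r\in[\de_0,\de_1],\ \la\ge0}\frac{|M(\la,r)|}{J_r(\la)}<\infty
\]
(Lemma~6.1.10 of \cite{KL}, whose hypothesis is precisely the linear growth $\tilde p(\la)\le b\la$), which immediately yields the conclusion for any $\ga<\big(\|F\|_\infty\cdot\sup|M|/J\big)^{-1}$. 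Your Taylor-near-$r$ / linear-tail dichotomy leaves the intermediate range $\la\in[0,A]\setminus U_r$ untreated; a compactness argument closes this, but then uniformity of the neighbourhood $U_r$ and of the constants in $r\in[\de_0,\de_1]$ has to be checked, and the ratio bound avoids all of that.
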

\noindent
The proof of this proposition is in Section 6.
\section{One-block Estimate}
In this section, we prove Proposition \ref{prop:oneblock}. We start with a key lemma.
\begin{lem}\label{lem:m0}
Fix $w > 0$ and $\a >0$. Then, there exists a constant $M_0 > 0$ and $\gamma_0 >0$ such that for all $\gamma \le \gamma_0$,
\begin{displaymath}
\lim_{n \to \infty} \frac{1}{n} \log E[\exp(w \gamma \sum_{k=1}^n X_k \cdot 1_{\{\bar{X_n} > M_0\}})] \le 0
\end{displaymath}
where $\{X_k\}_{k=1}^{\infty}$ is a sequence of i.i.d random variables with distribution $\nu_{\a}$ and $\bar{X_n}=\sum_{k=1}^n X_k/n$. 
\end{lem}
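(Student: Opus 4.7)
The plan is to reduce the statement to a Chernoff-type (exponential Markov) bound, using that under Remark~\ref{rem:moment} the common distribution $\nu_{\a}$ of the $X_k$ has a finite exponential moment. Let $\theta_0>0$ be such that $\phi(\theta):=E_{\nu_\a}[\exp(\theta X_1)]<\infty$ for all $\theta\in[0,\theta_0)$, and write $S_n=\sum_{k=1}^n X_k$ so that $\bar X_n=S_n/n$.

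The first step is to split the expectation according to whether the indicator is on or off. Since the exponent vanishes on $\{\bar X_n\le M_0\}$,
\begin{equation*}
E\bigl[\exp\bigl(w\gamma S_n\cdot\mathbf{1}_{\{\bar X_n>M_0\}}\bigr)\bigr]
= E\bigl[\exp(w\gamma S_n)\,\mathbf{1}_{\{S_n>M_0 n\}}\bigr]+P(S_n\le M_0 n).
\end{equation*}
The second summand is at most $1$, so it contributes $0$ after taking $\frac{1}{n}\log$. Everything therefore boils down to controlling the first summand.

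The second step is the standard exponential tilt: for any $s>0$ satisfying $w\gamma+s<\theta_0$, the inequality $\mathbf{1}_{\{S_n>M_0 n\}}\le e^{s(S_n-M_0 n)}$ together with independence of the $X_k$ gives
\begin{equation*}
E\bigl[\exp(w\gamma S_n)\,\mathbf{1}_{\{S_n>M_0 n\}}\bigr]
\le e^{-sM_0 n}\,E\bigl[\exp((w\gamma+s)S_n)\bigr]
= e^{-sM_0 n}\,\phi(w\gamma+s)^n,
\end{equation*}
hence
\begin{equation*}
\tfrac{1}{n}\log E\bigl[\exp(w\gamma S_n)\,\mathbf{1}_{\{S_n>M_0 n\}}\bigr]\le -sM_0+\log\phi(w\gamma+s).
\end{equation*}
Now fix some $s\in(0,\theta_0)$, set $\gamma_0:=(\theta_0-s)/(2w)$ so that $w\gamma+s\le w\gamma_0+s<\theta_0$ for all $\gamma\le\gamma_0$, and then choose $M_0\ge\log\phi(w\gamma_0+s)/s$. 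The right-hand side above is then $\le 0$, uniformly in $n$ and in $\gamma\in(0,\gamma_0]$.

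Combining the two pieces yields $E[\exp(w\gamma S_n\cdot\mathbf{1}_{\{\bar X_n>M_0\}})]\le 2$ for all $n$, so $\limsup_{n\to\infty}\frac{1}{n}\log(\cdot)\le 0$, as required. The only non-trivial ingredient is the finite exponential moment of $\nu_\a$, which is already provided by Remark~\ref{rem:moment}; the rest is a Cramér-type argument carried out in one line. No step presents a real obstacle, and the tuning of $\gamma_0$, $s$ and $M_0$ is the only thing to keep track of.
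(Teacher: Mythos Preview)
Your proof is correct and follows essentially the same Chernoff/Cram\'er strategy as the paper: both hinge on the finite exponential moment of $\nu_\a$ from Remark~\ref{rem:moment} and bound $E[\exp(w\gamma S_n)\mathbf{1}_{\{\bar X_n>M_0\}}]$ exponentially in $n$. The only cosmetic difference is that the paper separates the exponential and the indicator via Cauchy--Schwarz (bounding by $E[\exp(2w\gamma S_n)]^{1/2}P(\bar X_n>M_0)^{1/2}$ and then applying Markov to the tail probability), whereas you tilt directly with $\mathbf{1}_{\{S_n>M_0 n\}}\le e^{s(S_n-M_0 n)}$; the resulting choices of $\gamma_0$ and $M_0$ are the same up to harmless constants.
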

\begin{proof}
By Remark \ref{rem:moment}, the set $\{ \theta >0; E[\exp(\theta X_1)] < \infty \}$ is not empty. Take an element $\theta$ from the set and fix it. We prove the statement for $\gamma_0= \frac{\theta}{2w}$. For every $\gamma >0 $ satisfying $\gamma \le \gamma_0$, namely $2w\gamma \le \theta$,
\begin{align*}
\log E[\exp(w & \gamma \sum_{k=1}^n X_k  \cdot 1_{\{\bar{X_n} > M_0\}})] \le \log \Big( E[\exp(w \gamma \sum_{k=1}^n X_k) \cdot 1_{\{\bar{X_n} > M_0\}}] +1 \Big) \\
& \le  E[\exp(w \gamma \sum_{k=1}^n X_k) \cdot 1_{\{\bar{X_n} > M_0\}}]  \le E[\exp(2w \gamma \sum_{k=1}^n X_k)]^{1/2} E[1_{\{\bar{X_n} > M_0\}}]^{1/2} \\
& = E[\exp(2w \gamma X_1)]^{n/2} E[1_{\{\bar{X_n} > M_0\}}]^{1/2}. 
\end{align*}
A simple computation shows that
\begin{align*}
E[1_{\{\bar{X_n} > M_0\}}] = E[1_{\{ \theta \sum_{k=1}^n X_k >\theta n M_0\}}] 
\le E[\exp( \theta \sum_{k=1}^n X_k - \theta n M_0)] = \exp(- \theta n M_0)E[\exp(\theta X_1)]^n.
\end{align*}
Denote $\log E[\exp(aX_1)]$ by $R(a)$, then
\begin{align*}
\frac{1}{n} &\log E[\exp(w \gamma \sum_{k=1}^n X_k \cdot 1_{\{\bar{X_n} > M_0\}})] \le \frac{1}{n} \exp(R(2w\gamma)\frac{n}{2}) \exp(- \theta M_0 \frac{n}{2}) \exp(R(\theta)\frac{n}{2})\\
& = \frac{1}{n} \exp(\frac{n}{2}\{R(2w\gamma)- \theta M_0+ R(\theta)\}) \le \frac{1}{n} \exp[\frac{n}{2}\{- \theta M_0+ 2R(\theta)\}].
\end{align*}
Therefore, we choose $M_0$ as $M_0 > \frac{2R(\theta)}{\theta}$ and conclude the proof.
\end{proof}

Next, we show that this lemma allows us to introduce an indicator function the same way as the proof of Zero Range Process in \cite{KL}. First, recall the definition of the function $\tau_x V_{l,\psi}$:
\begin{displaymath}
\tau_xV_{l,\psi}(\eta) = \lvert \frac{1}{(2l+1)^d} \sum_{|y-x| \le l} \tau_{y}\psi(\eta)-\tilde{\psi}(\eta^l(x)) \rvert.
\end{displaymath}
Since we assume $\psi$ is $p_j$ or $q_j$ and we have the estimates (\ref{eq:p}) and (\ref{eq:q}), $\psi$ satisfies that $\psi$ is measurable with respect to $\{\eta(y); |y| \le A\}$ and $|\psi(\eta)| \le b \sum_{|y| \le  A}\eta(y)$ for some finite constant $A$. Therefore, simple computations show that
\begin{align*}
& |\frac{1}{(2l+1)^d} \sum_{|y-x| \le l} \tau_{y}\psi(\eta)| \le \frac{b}{(2l+1)^d} \sum_{|y-x| \le l}\sum_{|z-y| \le A}\eta(z) \\
&\le \frac{b(2A+1)^d}{(2l+1)^d} \sum_{|z-x| \le l+A}\eta(z) 
 \le  \frac{b(2A+1)^d(2l+2A+1)^d}{(2l+1)^d} \eta^{l+A}(x) \le b^{\prime}\eta^{l+A}(x)
\end{align*}
for some finite constant $b^{\prime}$ for every $l$. On the other hand, from the estimate
\begin{displaymath}
|\psi(\a)|=|E_{\nu_{\a}}[\psi(\eta)]| \le b E_{\nu_{\a}}[\sum_{|y| \le  A}\eta(y)] \le b(2A+1)^d \a,
\end{displaymath}
we have the inequality
\begin{displaymath}
|\psi(\eta^l(x))| \le b(2A+1)^d\eta^l(x) \le \frac{b(2A+1)^d(2l+2A+1)^d}{(2l+1)^d} \eta^{l+A}(x) \le b^{\prime}\eta^{l+A}(x).
\end{displaymath}
These two inequality lead to that:
\begin{equation}
\tau_xV_{l,\psi}(\eta) \le w \eta^{l+A}(x) \label{eq:v}
\end{equation}
where $w=2b^{\prime}$. Here, we take $M_0>0$ and $\gamma_0>0$ for this $w$ and $\delta_1$ whose existence is guaranteed in Lemma \ref{lem:m0}. Then, we can divide the integral that appears in the statement of Proposition \ref{prop:oneblock} into
\begin{align}
&\int_0^t ds \int_{\chi^N_d} \frac{1}{N^d}\sum_{x \in \T^N_d} \tau_xV_{l,\psi}(\eta)  1_{\{\eta^{l+A}(x) \le M_0\}} f^{N}_s d\nu^N_{\alpha} \label{eq:bdterm}\\
&+ \int_0^t ds \int_{\chi^N_d}\frac{1}{N^d}\sum_{x \in \T^N_d} \tau_xV_{l,\psi}(\eta) 1_{\{\eta^{l+A}(x) > M_0\}} f^{N}_s d\nu^N_{\alpha} \label{eq:unbdterm}.
\end{align}
By the inequality (\ref{eq:v}) and the entropy inequality, the term (\ref{eq:unbdterm}) is bound from above by
\begin{align*}
& \int_0^t ds \int_{\chi^N_d}\frac{1}{N^d}\sum_{x \in \T^N_d} w \eta^{l+A}(x) 1_{\{\eta^{l+A}(x) > M_0\}} f^{N}_s d\nu^N_{\alpha} \\
& \le \frac{1}{\gamma N^d} \int_0^t H^N(s) ds + \frac{1}{\gamma N^d} \int_0^t ds \log E_{\nu_{\rho(s,\cdot)}^N}[\exp\{ \gamma w\sum_{x \in \T^N_d} \eta^{l+A}(x) 1_{\{\eta^{l+A}(x) > M_0\}}\}]
\end{align*}
for every $\gamma >0$.
By H\"{o}lder inequality and by independence, the second term of last expression can be bounded by
\begin{align*}
&\frac{1}{\gamma N^d(2l+2A+1)^d} \sum_{x \in \T^N_d} \int_0^t ds \log E_{\nu_{\rho(s,\cdot)}^N}[\exp\{ \gamma w  \sum_{|z-x| \le l+A}\eta(z) 1_{\{\eta^{l+A}(x) > M_0\}}\}] \\
& \le \frac{1}{\gamma N^d(2l+2A+1)^d} \sum_{x \in \T^N_d} \int_0^t ds \log E_{\nu_{\delta_1}}[\exp\{ \gamma w  \sum_{|z-x| \le l+A}\eta(z) 1_{\{\eta^{l+A}(x) > M_0\}}\}] \\
& = \frac{t}{\gamma (2l+2A+1)^d}  \log E_{\nu_{\delta_1}}[\exp\{ \gamma w  \sum_{|z| \le l+A}\eta(z) 1_{\{\eta^{l+A}(0) > M_0\}}\}]
\end{align*}
which vanishes as $l \to \infty$ for every $\gamma \le \gamma_0$ by Lemma \ref{lem:m0}. Here we use the fact $\rho(s,u) \le \delta_1$ for every $s \ge 0$ and $u \in \T^d$.

Now, we deal with the term (\ref{eq:bdterm}). We separate the set of configurations into two sets: the  irreducible component that contains all configurations with at least one $d$-dimensional hypercube of particles of linear size 2 and the remaining configurations. In the first case the standard proof is easily adapted, while for the second case we will use the fact that this set has small measure with respect to $\nu_{\rho(s,\cdot)}^N$.

Fix $x \in \T^d_N$ and denote by $\mathcal{Q}_{x,l}$ the set of configurations which have at least one $d$-dimensional hypercube of sites of linear size 2 with occupation number different from $0$ in the box center $x$ and radius $l$:
\begin{displaymath}
\mathcal{Q}_{x,l}= \Big\{ \eta \ : \sum_{y \in C^x} \Pi_{z \in Q_y} \eta(z) \ge 1 \Big\}
\end{displaymath}
where $Q_y=\{z : z_i-y_i \in \{0,1\} \ \text{for all} \ 1 \le i \le d\}$ and $C^x= \{y : |z-x| \le l \ \text{for all} \ z \in Q_y\}$. We denote by $\mathcal{E}_{x,l}$ the irreducible set which contains $\mathcal{Q}_{x,l}$ (and all configurations that can be connected via an allowed path to one in $\mathcal{E}_{x,l}$) and we split the term (\ref{eq:bdterm}) into
\begin{align}
&\int_0^t ds \int_{\chi^N_d} \frac{1}{N^d}\sum_{x \in \T^N_d} \tau_xV_{l,\psi}(\eta) 1_{\{\mathcal{E}_{x,l}\}}(\eta)
 1_{\{\eta^{l+A}(x) \le M_0\}} f^{N}_s d\nu^N_{\alpha} \label{eq:bdterm1}\\
&+ \int_0^t ds \int_{\chi^N_d}\frac{1}{N^d}\sum_{x \in \T^N_d} \tau_xV_{l,\psi}(\eta) 1_{\{\mathcal{E}_{x,l}^c \}}(\eta) 1_{\{\eta^{l+A}(x) \le M_0\}} f^{N}_s d\nu^N_{\alpha} \label{eq:bdterm2}.
\end{align}
For the term (\ref{eq:bdterm1}), we can repeat the standard argument of the One-block estimate with Remark \ref{rem:entropy} because we have already succeeded to cut off large densities and we conclude:
\begin{displaymath}
\limsup_{l \to \infty}\limsup_{N \to \infty}\int_0^t ds \int_{\chi^N_d} \frac{1}{N^d}\sum_{x \in \T^N_d} \tau_xV_{l,\psi}(\eta) 1_{\{\mathcal{E}_{x,l}\}}(\eta)
 1_{\{\eta^{l+A}(x) \le M_0\}} f^{N}_s d\nu^N_{\alpha}=0.
\end{displaymath}
It remains to show that the term (\ref{eq:bdterm2}) is bounded from above by the sum of a term which vanishes as $N \to \infty$ and the time integral of the entropy multiplied by a constant and divided by $N^d$. Denote the probability of the set $\{ \eta; \eta(0) \ge 1\}$ under $\nu_{\delta_0}$ by $P_{\delta_0}$:
\begin{displaymath}
P_{\delta_0}=\nu_{\delta_0}(\{ \eta; \eta(0) \ge 1\}).
\end{displaymath}
Note that the probability of the ergodic set $\mathcal{E}_{x,l}$ converges rapidly to one with $l$, indeed the following holds:
\begin{equation}
\nu_{\rho(s, \cdot)}^N(\mathcal{E}_{x,l}) \ge 1-(1-P_{\delta_0}^{2^d})^{l^d} \label{eq:qxl}
\end{equation}
where we use the fact that initial profile is bounded away from zero, namely $\rho(s,u) \ge \delta_0$ for every $u \in \T^d$. 

The equation (\ref{eq:v}) shows that we can bound the term (\ref{eq:bdterm2}) from above by
\begin{displaymath}
\int_0^t ds \int_{\chi^N_d}\frac{1}{N^d}\sum_{x \in \T^N_d} w M_0 1_{\{\mathcal{E}_{x,l}^c \}}(\eta)f^{N}_s d\nu^N_{\alpha}
\end{displaymath}
and the entropy inequality allow us to bound it by
\begin{displaymath}
\frac{1}{\gamma N^d} \int_0^t H^N(s) ds + \frac{1}{\gamma N^d} \int_0^t ds \log E_{\nu_{\rho(s,\cdot)}^N}[\exp\{ \gamma w M_0 \sum_{x \in \T^N_d} 1_{\{\mathcal{E}_{x,l}^c \}} \}]
\end{displaymath}
for every $\gamma >0$.
By H\"{o}lder inequality and by independence, the second term of the last expression can be bounded from above by
\begin{align*}
&\frac{1}{\gamma N^d (2l+1)^d}  \sum_{x \in \T^N_d} \int_0^t ds \log E_{\nu_{\rho(s,\cdot)}^N}[\exp\{ \gamma w M_0 (2l+1)^d 1_{\{\mathcal{E}_{x,l}^c \}} \}] \\
&=\frac{1}{\gamma N^d (2l+1)^d}  \sum_{x \in \T^N_d} \int_0^t ds \log \Big( \nu_{\rho(s,\cdot)}^N(\mathcal{E}_{x,l}^c)(\exp\{ \gamma w M_0 (2l+1)^d\}-1)+1 \Big).
\end{align*}
By using the upper bound on $\nu_{\rho(s,\cdot)}^N(\mathcal{E}_{x,l}^c)$ which follows from (\ref{eq:qxl}) and the inequality $\log(x+1) \le x$, we can bound from above the last expression by 
\begin{displaymath}
\frac{t}{\gamma (2l+1)^d} (\exp\{ \gamma w M_0 (2l+1)^d\}-1) (1-P_{\delta_0}^{2^d})^{l^d},
\end{displaymath}
which vanishes as $l \to \infty$ for sufficiently small $\gamma>0$ since $P_{\delta_0}>0$.

\section{Proof of Proposition \ref{prop:convex}}
In this section, we prove Proposition \ref{prop:convex}. First, we reduce the problem stated in proposition to the problem where the time is fixed. With the elementary estimations, we obtain that
\begin{align*}
|M(\eta^l(x),\rho(s,\frac{x}{N}))| & \le \tilde{p}(\eta^l(x)) + \tilde{p}(\delta_1) + \sup_{a \in [\delta_0, {\delta_1} ] }\tilde{p}^{\prime} (a) \eta^l(x) + \sup_{a \in [\delta_0, {\delta_1} ] }
 a\tilde{p}^{\prime} (a) \\
& \le C_1 \eta^l(x) + C_2 
\end{align*} 
for some constants $C_1$ and $C_2$ because $\tilde{p}(\la)= \Phi(\la)^2=E_{\nu_{\la}}[g(\eta(0))g(\eta(e_1))] \le 1/2 E_{\nu_{\la}}[g(\eta(0))^2 + g(\eta(e_1))^2] \le b \la$. Therefore, by H\"{o}lder inequality and by independence, for sufficiently small $\gamma > 0$
\begin{align*}
&\frac{1}{\gamma N^d} \log E_{\nu_{\rho(s,\cdot)}^N}[\exp\{\gamma\sum_{x \in \T^N_d}F(s,\frac{x}{N})M(\eta^l(x),\rho(s,\frac{x}{N}))\}] \\
& \le \frac{1}{\gamma N^d (2l+1)^d}\sum_{x \in \T^N_d}\log E_{\nu_{\rho(s,\cdot)}^N}[\exp\{\gamma \|F\|_{\infty}(2l+1)^d( C_1 \eta^l(x) + C_2)\}] \\
& \le \frac{1}{\gamma N^d (2l+1)^d}\sum_{x \in \T^N_d}\log \Big\{  \exp(\gamma \|F\|_{\infty}(2l+1)^d C_2) E_{\nu_{{\delta_1}}}[\exp\{\gamma \|F\|_{\infty}C_1 \eta(0)\}]^{(2l+1)^d} \Big\} \\
& = \|F\|_{\infty}C_2 + \frac{1}{\gamma} \log E_{\nu_{{\delta_1}}}[\exp\{\gamma \|F\|_{\infty}C_1 \eta(0)\}] := k < \infty.
\end{align*}
In this formula, $\|F\|_{\infty}$ stands for the $L^{\infty}([0,T] \times \T^d)$ norm of $F$:
\begin{displaymath}
\|F\|_{\infty}= \sup_{(s,u) \in [0,T] \times \T^d}|F(s,u)|
\end{displaymath}
By the definition, $k$ does not depend on $N$, $l$ nor $s$. Therefore, we can apply Fatou's lemma to bound the expression
\begin{displaymath}
\limsup_{l \to \infty}\limsup_{N \to \infty} \int^t_0 \frac{1}{\gamma N^d}
 \log E_{\nu_{\rho(s,\cdot)}^N}[\exp\{\gamma\sum_{x \in \T^N_d}F(s,\frac{x}{N})M(\eta^l(x),\rho(s,\frac{x}{N}))\}] 
\end{displaymath}
from above by 
\begin{displaymath}
\int^t_0 \limsup_{l \to \infty}\limsup_{N \to \infty} \Big(\frac{1}{\gamma N^d}  \log E_{\nu_{\rho(s,\cdot)}^N}[\exp\{\gamma\sum_{x \in \T^N_d}F(s,\frac{x}{N})M(\eta^l(x),\rho(s,\frac{x}{N}))\}] \Big).
\end{displaymath}

The next lemma concludes the proof of Proposition \ref{prop:convex}.
\begin{lem}
There exists $\gamma_1 > 0 $ such that for all $0 \le s \le T$
\begin{equation}
\limsup_{l \to \infty}\limsup_{N \to \infty} \frac{1}{N^d} \log E_{\nu_{\rho(s,\cdot)}^N}[\exp\{\gamma_1 \sum_{x \in \T^N_d}F(s,\frac{x}{N})M(\eta^l(x),\rho(s,\frac{x}{N}))\}] \le 0.
\end{equation}
\end{lem}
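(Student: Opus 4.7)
The plan is to reduce the full exponential moment on $\T^d_N$ to a product of independent moments on disjoint mesoscopic blocks, take the hydrodynamic and Cram\'er limits, and then solve a small variational problem for $\gamma_1$ small.

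First I would tile $\T^d_N$ by a sublattice $\mathcal{L}_l := (2l+1)\Z^d \cap \T^d_N$ and its $(2l+1)^d$ translates $\mathcal{L}_l + z$, so that the sum $\sum_{x \in \T^d_N} F(s,\tfrac{x}{N}) M(\eta^l(x), \rho(s,\tfrac{x}{N}))$ splits into $(2l+1)^d$ sums over disjoint-block centres. Applying the generalised H\"older inequality to the $(2l+1)^d$ translates and using that $\eta^l(y), \eta^l(y')$ are independent under the product measure $\nu_{\rho(s,\cdot)}^N$ whenever $y, y'$ lie on a common sublattice (their defining boxes being disjoint) reduces the target to
\begin{displaymath}
\frac{1}{N^d}\log E\bigl[\exp(\gamma_1 \textstyle\sum_x \ldots)\bigr] \le \frac{1}{(2l+1)^d N^d}\sum_{x \in \T^d_N} \log E_{\nu_{\rho(s,\cdot)}^N}\bigl[\exp(\gamma_1 (2l+1)^d F(s,\tfrac{x}{N}) M(\eta^l(x), \rho(s,\tfrac{x}{N})))\bigr].
\end{displaymath}

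Next I would pass to the limits in the prescribed order. Within a block $B(x, l)$ the parameters $\rho(s, \cdot/N)$ vary by $O(l/N)$ around $\rho_x := \rho(s, x/N)$, so as $N \to \infty$ the restricted measure is asymptotically the homogeneous product $\nu_{\rho_x}^{B(l)}$ and the average over $x$ becomes a Riemann integral over $\T^d$. Since by Remark \ref{rem:moment} together with $\rho(s, u) \in [\delta_0, \delta_1]$ the measures $\nu_\rho$ enjoy uniform exponential moments (choose $\theta > 0$ with $\Phi(\delta_1)e^\theta < \psi^*$), Cram\'er's theorem yields
\begin{displaymath}
\limsup_{l \to \infty}\limsup_{N \to \infty} \frac{1}{N^d}\log E[\cdots] \le \int_{\T^d}\sup_{a \ge 0}\bigl[\gamma_1 F(s, u) M(a, \rho(s, u)) - I_{\rho(s, u)}(a)\bigr]\, du,
\end{displaymath}
where $I_\rho$ is the Cram\'er rate function of $\eta(0)$ under $\nu_\rho$.

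Finally I would verify that for $\gamma_1$ small enough the inner supremum is non-positive uniformly in $\rho \in [\delta_0, \delta_1]$ and $F$ bounded. Near $a = \rho$, the identities $M(\rho,\rho) = 0$ and $\partial_a M(a,\rho)|_{a=\rho} = 0$ give $|M(a,\rho)| \le C_q(a-\rho)^2$, while $I_\rho''(\rho) = 1/\operatorname{Var}_{\nu_\rho}(\eta(0)) > 0$ is bounded below uniformly on $[\delta_0, \delta_1]$, so $I_\rho(a) \ge c_q(a-\rho)^2$ locally; choosing $\gamma_1 \le c_q/(\|F\|_\infty C_q)$ wins this regime. For $a$ far from $\rho$, the bound $|M(a,\rho)| \le C_1 a + C_2$ established earlier in the section is dominated by the at-least-linear growth $I_\rho(a) \ge \theta a - \log E_{\nu_\rho}[e^{\theta \eta(0)}]$ (with the same $\theta$ as above); a further smallness condition on $\gamma_1$ closes this regime, and a continuity argument on $[\delta_0, \delta_1]$ gives uniformity. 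The main obstacle I anticipate is this last step: exhibiting quadratic coercivity of $I_\rho$ at its minimum and at-least-linear growth at infinity with constants uniform in $\rho \in [\delta_0, \delta_1]$, and balancing them against $\|F\|_\infty$; this is precisely what fixes how small $\gamma_1$ must be chosen.
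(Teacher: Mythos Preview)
Your proposal is correct and takes essentially the same approach as the paper. The paper simply packages your H\"older--tiling--plus--Cram\'er reduction as a citation of Lemma~6.1.8 in \cite{KL}, obtaining the variational upper bound $\int_{\T^d}\sup_{\lambda\ge 0}\{\gamma F(s,u)M(\lambda,\rho(s,u))-J_{\rho(s,u)}(\lambda)\}\,du$, and packages your near/far analysis of the inner supremum as a citation of Lemma~6.1.10 in \cite{KL} (together with the linear bound $\tilde p(\lambda)\le b\lambda$) to conclude that $\sup_{\beta\in[\delta_0,\delta_1],\,\lambda\ge 0}|M(\lambda,\beta)|/J_\beta(\lambda)<\infty$, which is exactly what forces the supremum to be nonpositive for $\gamma_1$ small.
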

To prove this lemma, we use the statement of Lemma 6.1.8 in \cite{KL}:
\begin{lem}
Let $G:\T^d \times \R_+ \to \R$ be a continuous function such that
\begin{equation}
\sup_{u \in \T^d}|G(u,\la)| \le D_0 + D_1 \la \quad \text{for all} \ \la \in \R_+
\end{equation}
where $D_0$ is a finite constant and $D_1$ is a constant bounded by $\log[\psi^*/\Phi(\delta_1)]$:
\begin{displaymath}
D_1 < \log \frac{\psi^*}{\Phi(\delta_1)}.
\end{displaymath}
Then,
\begin{align*}
\limsup_{l \to \infty}\limsup_{N \to \infty}& \frac{1}{N^d} \log E_{\nu_{\rho(\cdot)}^N}[\exp\{ \sum_{x \in \T^N_d}G \big(\frac{x}{N}, \eta^l(x) \big)\}] \\
& \le \int_{\T^d} du \sup_{\la \ge 0} \{G(u,\la)-J_{\rho(u)}(\la)\},
\end{align*}
where $J_{\b}(\cdot)$ is a rate function:
\begin{displaymath}
J_{\b}(\la)=\begin{cases}
		\la \log (\frac{\Phi(\la)}{\Phi(\b)}) - \log \Big(\frac{Z(\Phi(\la))}{Z(\Phi(\b))}\Big) & \text{for} \ \la \ge 0 \\
		\infty & \text{otherwise}.
\end{cases}
\end{displaymath}
\end{lem}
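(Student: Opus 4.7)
The plan is to decouple the highly overlapping family of empirical averages $\{\eta^l(x)\}_x$ by a Hölder partition argument, to replace the inhomogeneous product measure by a locally constant one inside each cube, to identify the asymptotics by Cramér's theorem, and finally to recognize the answer as a Riemann sum.

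First I would partition $\T^d_N$ into $(2l+1)^d$ residue classes $A_1,\dots,A_{(2l+1)^d}$ so that any two distinct sites in the same class are at sup-distance at least $2l+1$. For $x,y$ in the same class, the cubes of radius $l$ around $x$ and $y$ are disjoint, so $\eta^l(x)$ and $\eta^l(y)$ are independent under the product measure $\nu^N_{\rho(\cdot)}$. The generalized Hölder inequality (with all exponents equal to $(2l+1)^d$, which sum to $1$ after inversion) together with this within-class independence gives
\begin{equation*}
\frac{1}{N^d}\log E_{\nu^N_{\rho(\cdot)}}\Bigl[\exp\Bigl(\sum_{x\in\T^d_N} G(x/N,\eta^l(x))\Bigr)\Bigr]
\le \frac{1}{N^d (2l+1)^d} \sum_{x \in \T^d_N} \log E_{\nu^N_{\rho(\cdot)}}\Bigl[\exp\bigl((2l+1)^d G(x/N,\eta^l(x))\bigr)\Bigr].
\end{equation*}

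Next, for each $x$, I would replace the restriction of $\nu^N_{\rho(\cdot)}$ to the cube $\{y:|y-x|\le l\}$ by the homogeneous product measure $\nu_{\rho(x/N)}^{\otimes(2l+1)^d}$. Since $\rho$ is continuous on the torus, the ratio of marginals at neighbouring sites is $1+O(l/N)$ uniformly in $x$, and the growth assumption $|G(u,\la)|\le D_0+D_1\la$ with $D_1<\log[\psi^*/\Phi(\delta_1)]$ keeps the relevant exponential moment $E_{\nu_{\delta_1}}[\exp(D_1 \eta(0))]=Z(e^{D_1}\Phi(\delta_1))/Z(\Phi(\delta_1))<\infty$ finite and controls this error as $N\to\infty$ with $l$ fixed. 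I would then apply Cramér's upper bound to the iid sum $\eta^l(0)$ under $\nu_{\rho(x/N)}$. The log moment generating function of $\eta(0)$ under $\nu_\beta$ is $M_\beta(\theta)=\log[Z(e^\theta\Phi(\beta))/Z(\Phi(\beta))]$, finite precisely when $\theta<\log[\psi^*/\Phi(\beta)]$; the stationarity condition $\la=M_\beta'(\theta)=R(e^\theta\Phi(\beta))$, combined with the identity $\psi Z'(\psi)/Z(\psi)=R(\psi)$, is solved by $e^\theta\Phi(\beta)=\Phi(\la)$, yielding the Legendre transform
\begin{equation*}
I_\beta(\la)=\sup_\theta\{\theta\la-M_\beta(\theta)\}=\la\log\bigl(\Phi(\la)/\Phi(\beta)\bigr)-\log\bigl(Z(\Phi(\la))/Z(\Phi(\beta))\bigr)=J_\beta(\la).
\end{equation*}
Cramér's theorem then delivers, for each fixed $x$,
\begin{equation*}
\limsup_{l\to\infty}\frac{1}{(2l+1)^d}\log E_{\nu_{\rho(x/N)}}\bigl[\exp((2l+1)^d G(x/N,\eta^l(0)))\bigr]\le \sup_{\la\ge 0}\{G(x/N,\la)-J_{\rho(x/N)}(\la)\}.
\end{equation*}

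Finally, I would identify the resulting averages with a Riemann integral. A direct differentiation gives $J_\beta'(\la)=\log(\Phi(\la)/\Phi(\beta))\to\log[\psi^*/\Phi(\beta)]$ as $\la\to\infty$, so under the hypothesis on $D_1$ the map $u\mapsto\sup_\la\{G(u,\la)-J_{\rho(u)}(\la)\}$ is bounded and upper semicontinuous, and the Riemann sum converges to $\int_{\T^d}\sup_\la\{G(u,\la)-J_{\rho(u)}(\la)\}\,du$. The main obstacle is interchanging the two $\limsup$'s with the Riemann sum: Cramér's bound is pointwise in $x$, while our sum averages $N^d$ such pointwise bounds, so a Fatou-type step requires a uniform-in-$x$ upper bound on the one-box log moment generating function. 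This uniformity is exactly what the hypothesis $D_1<\log[\psi^*/\Phi(\delta_1)]$ provides, since $\rho(u)\le\delta_1$ forces every $M_{\rho(x/N)}$ to be finite at $(2l+1)^d$-independent arguments of the form $(2l+1)^d D_1$ after factorisation over the cube.
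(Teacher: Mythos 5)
The paper does not prove this lemma: it invokes it verbatim as Lemma~6.1.8 of Kipnis--Landim \cite{KL} and moves on. So there is no in-paper proof to compare against; what you have done is reconstruct the proof from the book. Your reconstruction follows the standard Kipnis--Landim route and the skeleton is right: (i) partition $\T^d_N$ into $(2l+1)^d$ residue classes so that boxes around same-class sites are disjoint, apply the generalized H\"older inequality plus within-class independence to bound $\tfrac{1}{N^d}\log E[\exp(\sum_x G)]$ by $\tfrac{1}{N^d(2l+1)^d}\sum_x \log E[\exp((2l+1)^d G(x/N,\eta^l(x)))]$; (ii) freeze the density at $\rho(x/N)$ on each box using continuity of $\rho$, with the error vanishing in the inner $N\to\infty$ limit; (iii) recognize $J_\beta$ as the Legendre transform of $\theta\mapsto \log[Z(e^\theta\Phi(\beta))/Z(\Phi(\beta))]$; (iv) pass to a Riemann integral and let $l\to\infty$ inside the integral. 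Your computation of $I_\beta=J_\beta$ via $\psi Z'(\psi)/Z(\psi)=R(\psi)$ and $e^\theta\Phi(\beta)=\Phi(\la)$ is correct.

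Two precision points worth fixing. First, the step you attribute to ``Cramér's theorem'' is really the upper-bound half of Varadhan's lemma (the Laplace principle), not Cramér's LDP itself: Cramér gives the rate function $J_\beta$ for the empirical mean $\eta^l(0)$, and one then needs a moment condition ($\limsup_l (2l+1)^{-d}\log E[\exp(\gamma(2l+1)^d G(u,\eta^l(0)))]<\infty$ for some $\gamma>1$) to upgrade the LDP to the exponential-integral asymptotics. The hypothesis $D_1<\log[\psi^*/\Phi(\delta_1)]$ is exactly what guarantees this, since for $\gamma$ close enough to $1$ one still has $\gamma D_1<\log[\psi^*/\Phi(\delta_1)]$ and by factorization over the box the relevant moment reduces to $E_{\nu_{\delta_1}}[\exp(\gamma D_1\eta(0))]^{(2l+1)^d}$, which is finite. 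Second, your narrative takes the $l\to\infty$ limit ``for each fixed $x$'' before passing to the Riemann integral, whereas the stated order of limits is $\limsup_l\limsup_N$: one should first send $N\to\infty$ at fixed $l$ (Riemann sum $\to$ integral, inhomogeneity error $\to 0$), and only then send $l\to\infty$ inside the $du$-integral, using the same uniform bound as a dominating function (reverse Fatou / dominated convergence, not Fatou). Neither point is a conceptual gap --- you already identify the need for a uniform bound --- but the invocation of ``Cramér'' and ``Fatou'' should be replaced by ``Varadhan upper bound'' and ``reverse Fatou/dominated convergence'' respectively for the argument to be airtight.
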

To apply this lemma to the function
\begin{displaymath}
G(u,\la)=\gamma F(s,u)\{\tilde{p}(\la) - \tilde{p}(\rho(s,u)) - \tilde{p}^{\prime} (\rho(s,u))(\la-\rho(s,u))\},
\end{displaymath}
notice that
\begin{displaymath}
\sup_{u \in \T^d}|G(u,\la)| \le \gamma \|F\|_{\infty} \{b \la +\sup_{a \in [\delta_0, \delta_1]} \tilde{p}(a) + \sup_{a \in [\delta_0, \delta_1]}\tilde{p}^{\prime} (a) \la + \sup_{a \in [\delta_0, \delta_1]} a\tilde{p}^{\prime} (a)\}.
\end{displaymath}

We summarize the conclusions up to this point in the next corollary.
\begin{cor}
Let 
\begin{displaymath}
\gamma_2= \frac{1}{(b+ \sup_{a \in [\delta_0, \delta_1]}\tilde{p}^{\prime} (a))\|F\|_{\infty}} \log \frac{\psi^*}{\Phi(\delta_1)}.
\end{displaymath}
Then, for all $\gamma < \gamma_2$ and $0 \le s \le T$,
\begin{align*}
\limsup_{l \to \infty}\limsup_{N \to \infty}& \frac{1}{N^d} \log E_{\nu_{\rho(s, \cdot)}^N}[\exp\{\gamma \sum_{x \in \T^N_d}F(s,\frac{x}{N})M(\eta^l(x), \rho(s,\frac{x}{N}))\}] \\
& \le \int_{\T^d} du \sup_{\la \ge 0} \{ \gamma F(s,u)M(\la,\rho(s,u))-J_{\rho(s,u)}(\la)\}.
\end{align*}
\end{cor}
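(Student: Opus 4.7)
The plan is to obtain the corollary as a direct application of the preceding Lemma 6.1.8 (from \cite{KL}) to a specific choice of $G$. Since $s$ is fixed throughout, I treat $\rho(s,\cdot)$ as a profile on $\T^d$ and set
\begin{displaymath}
G(u,\la)=\gamma F(s,u)\{\tilde{p}(\la)-\tilde{p}(\rho(s,u))-\tilde{p}^{\prime}(\rho(s,u))(\la-\rho(s,u))\}=\gamma F(s,u)M(\la,\rho(s,u)).
\end{displaymath}
With this choice the exponent inside the expectation is exactly $\sum_x G(\tfrac{x}{N},\eta^l(x))$, so provided the hypothesis of Lemma 6.1.8 is met, the conclusion of the corollary follows immediately from the conclusion of that lemma.

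The only thing to verify is the linear growth condition $\sup_{u}|G(u,\la)|\le D_0+D_1\la$ with the admissible slope $D_1<\log[\psi^*/\Phi(\delta_1)]$. Using the estimate from the text just before the corollary statement, I would bound
\begin{displaymath}
\sup_{u\in\T^d}|G(u,\la)|\le\gamma\|F\|_\infty\Bigl\{\tilde{p}(\la)+\sup_{a\in[\delta_0,\delta_1]}\tilde{p}(a)+\sup_{a\in[\delta_0,\delta_1]}\tilde{p}^{\prime}(a)\,\la+\sup_{a\in[\delta_0,\delta_1]}a\,\tilde{p}^{\prime}(a)\Bigr\},
\end{displaymath}
and then use the crucial inequality $\tilde{p}(\la)=\Phi(\la)^{2}\le b\la$, which follows from Cauchy--Schwarz together with assumption (G) (exactly the computation performed in the proof of the reduction lemma just above). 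This gives the linear bound with slope
\begin{displaymath}
D_1=\gamma\|F\|_\infty\Bigl(b+\sup_{a\in[\delta_0,\delta_1]}\tilde{p}^{\prime}(a)\Bigr)
\end{displaymath}
and a finite intercept $D_0$ depending only on $\gamma,\|F\|_\infty,\delta_0,\delta_1$.

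The condition $\gamma<\gamma_2$ defined in the corollary is chosen precisely so that $D_1<\log[\psi^*/\Phi(\delta_1)]$, and hence Lemma 6.1.8 applies and yields the stated bound with the variational right-hand side $\int_{\T^d}\sup_{\la\ge 0}\{\gamma F(s,u)M(\la,\rho(s,u))-J_{\rho(s,u)}(\la)\}\,du$. There is essentially no genuine obstacle: the main content is really the linear-growth verification, and it is the assumption (G) (through the bound $\Phi(\la)^2\le b\la$) that makes the slope $D_1$ depend on $\gamma$ in a controlled way and keeps the admissible range $\gamma<\gamma_2$ nontrivial. One minor point worth mentioning is that Lemma 6.1.8 is stated for a fixed profile $\rho(\cdot)$, so I simply invoke it with the profile $\rho(s,\cdot)$ and note that the bound on $D_0,D_1$ is uniform in $s\in[0,T]$ thanks to the uniform bounds $\delta_0\le\rho(s,u)\le\delta_1$ and $\|F\|_\infty<\infty$, which is what allowed Fatou to be applied earlier.
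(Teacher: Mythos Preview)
Your proposal is correct and follows essentially the same argument as the paper: define $G(u,\la)=\gamma F(s,u)M(\la,\rho(s,u))$, verify the linear growth bound via $\tilde{p}(\la)\le b\la$ so that $D_1=\gamma\|F\|_\infty(b+\sup_{a\in[\delta_0,\delta_1]}\tilde{p}'(a))$, and observe that $\gamma<\gamma_2$ is exactly the condition $D_1<\log[\psi^*/\Phi(\delta_1)]$ needed to invoke Lemma~6.1.8. The paper presents this verification in the paragraph immediately preceding the corollary and then states the corollary as a summary, so your write-up matches it in both content and level of detail.
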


To conclude the proof of Proposition \ref{prop:convex}, we have to show that the right hand side of the previous inequality is non positive for all $\gamma$ sufficiently small. This result follows from the next lemma.
\begin{lem}\label{lem:mjratio}
For every $0 < K_1 < K_2 < \infty$,
\begin{displaymath}
\sup_{\b \in [K_1,K_2], \la \ge 0} \frac{|M(\la, \b)|}{J_{\b}(\la)} < \infty.
\end{displaymath}
\end{lem}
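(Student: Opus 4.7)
The plan is to split the ratio into three regimes: a Taylor regime near $\la=\b$, a compact ``bulk'' regime where both functions are finite and $J_{\b}$ has a positive minimum away from $\b$, and a far regime $\la\to\infty$ where the possible divergence must be controlled. All constants will be required to depend continuously on $\b\in[K_1,K_2]$, so uniformity will follow from compactness of $[K_1,K_2]$ and the fact that $\Phi,\Phi'>0$ and continuous on this interval.

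First I would compute $J_\b'$ and $J_\b''$ explicitly. Using $Z'(\psi)/Z(\psi)=R(\psi)/\psi$ and $R=\Phi^{-1}$ one finds
\[
J_\b'(\la)=\log\bigl(\Phi(\la)/\Phi(\b)\bigr),\qquad J_\b''(\la)=\Phi'(\la)/\Phi(\la)>0,
\]
so $J_\b$ is strictly convex, vanishes together with its derivative at $\la=\b$, and is therefore strictly positive away from $\b$. Since $\tilde p(\la)=\Phi(\la)^2$ is smooth, $M(\la,\b)$ also vanishes to second order at $\la=\b$. A second order Taylor expansion then gives, for $|\la-\b|$ smaller than some $\eta_0>0$,
\[
|M(\la,\b)|\le \tfrac12\bigl(\sup_{[\b-\eta_0,\b+\eta_0]}|\tilde p''|\bigr)(\la-\b)^2,\qquad J_\b(\la)\ge \tfrac12\bigl(\inf_{[\b-\eta_0,\b+\eta_0]}\Phi'/\Phi\bigr)(\la-\b)^2.
\]
Since $\Phi,\Phi'$ are bounded above and below by positive constants on a neighbourhood of $[K_1,K_2]$, both estimates are uniform in $\b\in[K_1,K_2]$, and the ratio is uniformly bounded on $\{|\la-\b|\le\eta_0\}$.

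Next I would handle the ``bulk'' region $\{\eta_0\le|\la-\b|,\ \la\le L\}$ for any fixed $L$. Here $(\b,\la)\mapsto J_\b(\la)$ is continuous and strictly positive on the compact set $[K_1,K_2]\times[0,L]\setminus\{(\b,\la):|\la-\b|<\eta_0\}$ (note that $J_\b(0)=\log Z(\Phi(\b))>0$, so the boundary point $\la=0$ causes no trouble), while $M(\la,\b)$ is continuous, hence bounded. Thus the ratio is bounded on this set by a constant depending on $L$.

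The main obstacle, and the step I would spend the most care on, is the tail regime $\la\ge L$ with $L$ large. The clean identity $J_\b(\la)=\log Z(\Phi(\b))+\int_0^{\la}\log(\Phi(s)/\Phi(\b))\,ds$ (obtained by integrating $J_\b'$) lets me split into two cases according to whether the radius of convergence $\psi^*$ is finite. If $\psi^*=\infty$, then $\Phi(\la)\to\infty$, so for $\la\ge 2\b$ one has
\[
J_\b(\la)\ge \int_{(\la+\b)/2}^{\la}\log(\Phi(s)/\Phi(\b))\,ds\ge \tfrac{\la-\b}{2}\log\bigl(\Phi((\la+\b)/2)/\Phi(\b)\bigr),
\]
which grows strictly faster than linearly in $\la$; meanwhile the computation at the head of Section~6 yields $\tilde p(\la)=\Phi(\la)^2\le b\la$, so $|M(\la,\b)|=O(\la)$ with constants uniform on $[K_1,K_2]$. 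Hence the ratio tends to $0$. If instead $\psi^*<\infty$, then $\Phi(\la)\uparrow\psi^*<\infty$ and the same integral representation gives $J_\b(\la)\ge c_\b\,(\la-\b)$ for $\la$ large, with $c_\b:=\tfrac12\log(\Phi(3\b/2)/\Phi(\b))$ bounded below uniformly on $[K_1,K_2]$; and $|M(\la,\b)|\le (\psi^*)^2+\Phi(\b)^2+2\Phi(\b)\Phi'(\b)(\la+\b)$ is also at most linear in $\la$ with uniform coefficients. In both sub-cases the ratio is bounded for $\la\ge L$ uniformly in $\b\in[K_1,K_2]$. Combining the three regimes yields the lemma.
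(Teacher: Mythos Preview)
Your proof is correct and follows essentially the same route as the paper's. The paper's own proof consists of a single sentence citing Lemma~6.1.10 in \cite{KL} together with the bound $\tilde p(\la)\le b\la$; what you have written is a self-contained version of precisely that argument (three-regime split using convexity of $J_\b$, compactness for the bulk, and linear control of $|M|$ against the at-least-linear growth of $J_\b$ at infinity), and you invoke the same key inequality $\tilde p(\la)=\Phi(\la)^2\le b\la$ coming from assumption~(G).
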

\begin{proof}
Straightforward form the proof of Lemma 6.1.10 in \cite{KL} with the fact that $\tilde{p}(\la) \le b \la$.
\end{proof}
\begin{cor}
There exists $\gamma_1>0$ such that for all $\gamma < \gamma_1$
\begin{displaymath}
\sup_{(s,u) \in [0,T] \times \T^d, \lambda \ge 0} \{ \gamma F(s,u) M(\la, \rho(s,u)) -J_{\rho(s,u)}(\la)\} \le 0.
\end{displaymath}
\end{cor}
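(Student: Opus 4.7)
The statement follows almost immediately from Lemma~\ref{lem:mjratio} once one observes that the arguments appearing in the supremum are constrained to a compact range. My plan is as follows. First, since $\rho(s,u)$ is a classical solution of (\ref{eq:mpme}) satisfying $\delta_0 \le \rho(s,u) \le \delta_1$ uniformly on $[0,T]\times\T^d$, I would apply Lemma~\ref{lem:mjratio} with $K_1 = \delta_0$ and $K_2 = \delta_1$ to extract a finite constant
\[
C_0 := \sup_{\b \in [\delta_0,\delta_1],\,\la \ge 0} \frac{|M(\la,\b)|}{J_\b(\la)} < \infty,
\]
so that $|M(\la,\rho(s,u))| \le C_0\, J_{\rho(s,u)}(\la)$ for every $(s,u,\la)\in[0,T]\times\T^d\times\R_+$. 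Next, since $\rho \in C^{1+\e,2+\e}(\R_+\times\T^d)$ is bounded and bounded away from zero, $\lambda(t,u)=\log\Phi(\rho(t,u))$ and its first two spatial derivatives are uniformly bounded on $[0,T]\times\T^d$, so $\|F\|_\infty<\infty$.

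Combining these facts, for any $(s,u,\la)$ and any $\gamma>0$,
\[
\gamma F(s,u)\,M(\la,\rho(s,u)) - J_{\rho(s,u)}(\la) \;\le\; \bigl(\gamma\|F\|_\infty C_0 - 1\bigr)\,J_{\rho(s,u)}(\la),
\]
where I use the standard fact that the Cramér-type rate function $J_\b(\cdot)$ is nonnegative (it is the Legendre transform of the log moment generating function of $\eta(0)$ under $\nu_\b$, vanishing at $\la=\b$). Choosing
\[
\gamma_1 := \min\!\left\{\gamma_2,\; \frac{1}{\|F\|_\infty C_0}\right\}
\]
ensures that the bracketed factor is $\le 0$ whenever $\gamma<\gamma_1$, and $J_{\rho(s,u)}(\la)\ge 0$ then makes the whole expression nonpositive. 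Taking the supremum over $(s,u,\la)$ yields the claim.

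\textbf{Main obstacle.} There is no real obstacle here: once Lemma~\ref{lem:mjratio} is in hand, the corollary is a one-line rearrangement plus a choice of constant. The only subtlety is ensuring that the compactness hypothesis of the lemma applies, which is exactly where the hypothesis $\delta_0\le\rho_0\le\delta_1$ (propagated in time by the maximum principle for the MPME, as quoted via Theorem A2.4.1 of \cite{KL}) plays its decisive role---had $\rho$ been allowed to touch zero, $\b=0$ would lie outside the range covered by Lemma~\ref{lem:mjratio} and the uniform bound $C_0$ would fail. This compatibility with the lower bound $\delta_0$ is what the whole scheme of cutting off low and high densities has been arranged to guarantee.
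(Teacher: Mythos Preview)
Your proposal is correct and follows exactly the same approach as the paper's proof, which reads in full: ``Straightforward from Lemma~\ref{lem:mjratio} because $F$ is bounded on $[0,T]\times\T^d$ and the range of $\rho(\cdot,\cdot)$ is contained in $[\delta_0,\delta_1]$.'' You have simply spelled out the one-line argument (extract $C_0$ from the lemma, multiply by $\|F\|_\infty$, use $J_\beta\ge 0$), together with the compactness observation that makes the lemma applicable.
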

\begin{proof}
Straightforward from Lemma \ref{lem:mjratio} because $F$ is bounded on $[0,T] \times \T^d$ and the range of $\rho(\cdot, \cdot)$ is contained in $[\delta_0, \delta_1]$.
\end{proof}

\subsection*{Acknowledgement}
The author would like to thank Professor T.Funaki for helping her with valuable suggestions.


\begin{thebibliography}{99}
\bibitem{GLT}{\sc P.\ Gon\c{c}alves, C.\ Landim and C.\ Toninelli},
{\it Hydrodynamic limit for a particle system with degenerate rates}, 
Ann.\ Inst.\ H.\ Poincar\'e,\ Probab.\ Statis., {\bf 45} (2009), 887--909.

\bibitem{KL}{\sc C.\ Kipnis and C.\ Landim},
{\it Scaling Limits of Interacting Particle Systems}, 
1999, Springer.

\bibitem{Y}{\sc H.T.\ Yau},
{\it Relative Entropy and Hydrodynamics of Ginzburg-Landau Models}, Letters Math.\ Phys., {\bf 22} (1991), 63--80.

\end{thebibliography}
\end{document}